\begin{document}
\title
{Existence results for degenerate $p(x)$-Laplace equations with Leray-Lions type operators}

\author
{Ky Ho and Inbo Sim}
\address{Ky Ho \newline
Department of Mathematics, University of Ulsan, Ulsan 680-749, Republic of Korea}
\email{hnky81@gmail.com}
\address{Inbo Sim \newline
Department of Mathematics, University of Ulsan, Ulsan 680-749, Republic of Korea}
\email{ibsim@ulsan.ac.kr}

\subjclass[2000]{35J20, 35J60, 35J70, 47J10, 46E35}
\keywords{$p(x)$-Laplacian; weighted variable exponent
Lebesgue-Sobolev spaces; multiplicity; a priori bound;  Leray-Lions type operators}

\begin{abstract}
 We show the various existence results for degenerate $p(x)$-Laplace equations with Leray-Lions type operators. A suitable condition on degeneracy is discussed and proofs are mainly based on direct methods and critical point theories in Calculus of Variations. In particular, we investigate the various situations of the growth rates between principal operators and nonlinearities.

\end{abstract}

\maketitle
\numberwithin{equation}{section}
\newtheorem{theorem}{Theorem}[section]
\newtheorem{lemma}[theorem]{Lemma}
\newtheorem{proposition}[theorem]{Proposition}
\newtheorem{corollary}[theorem]{Corollary}
\newtheorem{definition}[theorem]{Definition}
\newtheorem{example}[theorem]{Example}
\newtheorem{remark}[theorem]{Remark}
\allowdisplaybreaks

\section{Introduction}
Many problems in various real fields, for examples, electro-rheological
fluids \cite{Ruzicka}, the thermistor
problem \cite{Zhikov}, or the problem of image recovery \cite{Chen} are related to differential equations with nonsmooth growth which contain $p(x)$-Laplacian. In this paper, we investigate  the following equations which extend
 $p(x)$-Laplacian 
\begin{eqnarray}\label{1.1}
\begin{cases}
 -\operatorname{div}a(x,\nabla u)=\lambda f(x,u) \quad &\text{in } \Omega ,\\
  u=0\quad &\text{on } \partial \Omega ,
  \end{cases}
\end{eqnarray}
where $\Omega $ is a bounded domain in $\mathbb{R}^{N}$ with a Lipschitz boundary $\partial \Omega;$ $a : \Omega \times \mathbb R^N \to
\mathbb R^N$ and $f : \Omega \times \mathbb R \to
\mathbb R$ are Carath\'{e}odory functions; and $\lambda$ is a positive parameter. The operator $ \operatorname{div}a(x,\nabla u)$ generalizes a degenerate $p(x)$-Laplacian $ \operatorname{div}\left( w(x)| \nabla u| ^{p(x)-2}\nabla u\right),$ where $p\in C(\overline{\Omega},(1,\infty)) =: C_+(\overline{\Omega})$ and $w\in P_+(\Omega)$ the set of all measurable functions on $\Omega$ that are positive a.e. on $\Omega$. There have been many studies about $p(x)$-Laplacian (see \cite {Boureanu, Diening, Fan3, Fan4, Kovacik, Mihailescu1} and the references therein).

Throughout the paper, we assume that $w\in P_+(\Omega)$ and $p\in C_+(\overline{\Omega});$ and for each $q\in C_+(\overline{\Omega})$, set $q^- := \min_{x \in \overline{\Omega}} q(x)$, $q^+ := \max_{x \in \overline{\Omega}} q(x),$ and let $q'$ denote the conjugate function of $q$, i.e., $\frac{1}{q(x)}+\frac{1}{q'(x)}=1, \forall x\in \overline{\Omega}$. Furthermore, we assume that
\begin{itemize}
\item[($A0$)]
$a(x,-\xi)=-a(x,\xi)$ for a.e. $x\in \Omega $ and all $\xi \in \mathbb R^N.$

\item[($A1)$] There exists a Carath\'{e}odory function $A : \Omega \times \mathbb R^N \to \mathbb R,$ continuously differentiable with respect to its second argument, such that $A(x,0)=0$ for a.e. $x\in \Omega$ and $a(x,\xi)=\nabla_\xi A(x,\xi)$ for a.e. $x\in \Omega $ and all $\xi \in \mathbb R^N.$

\item[($A2$)] $|a(x,\xi)| \leq \widetilde{C} w(x)\left[k(x) + |\xi|^{p(x)-1}\right]$
for a.e. $x\in \Omega $ and all $\xi \in \mathbb R^N ,$ where $\widetilde{C}$ is a positive constant, $k\in P_+(\Omega)$ such that $wk^{p'}\in L^1(\Omega)$ and $|\cdot|$ denotes the Euclidean norm.

\item[($A3$)] $0 < [a(x,\xi) - a(x,\eta)]\cdot (\xi-\eta)$ for a.e. $x\in \Omega $ and all\  $\xi, \eta \in \mathbb R^N,\xi \ne \eta.$

\item[($A4$)] $\bar{C}w(x)|\xi|^{p(x)} \leq a(x,\xi)\cdot \xi$ for a.e. $x\in \Omega $ and all $\xi \in \mathbb R^N,$ where $\bar{C}$ is a positive constant.

\item[($A5$)] $a(x,\xi)\cdot \xi \leq p^+A(x,\xi)$ for a.e. $x\in \Omega $ and all $\xi \in \mathbb R^N.$

\item[($w1$)] $ w \in L_{loc}^{1}(\Omega) $ and $ w^{-s} \in L^{1}(\Omega) $ for some $s\in C(\overline{\Omega })$ such that  $s(x)\in \left(\frac{N}{p(x)},\infty\right)\cap \Big[\frac{1}{p(x)-1},\infty\Big)$ for all $x\in \overline{\Omega }.$
\end{itemize}
The assumption $(w1)$ is to assure basic properties of the weighted variable exponent Sobolev spaces $W^{1,p(x)}(w,\Omega),$ that are defined later. Since $(w1)$ takes place, the weight $w$ may be neither bounded nor away from zero. A problem containing such a $w$ is called degenerate. Notice that $(A0)-(A5)$ may not be fulfilled simultaneously. Also note that under $(A1)$, we have for a.e. $x\in \Omega$ and for all $\xi \in \mathbb R^N$,
\begin{equation}\label{formA}
A(x,\xi)=\int_{0}^{1}a(x,t\xi)\cdot \xi \ dt.
\end{equation}
Thus, it is easy to see that $(A2)$ implies that
\begin{itemize}
\item[($\widetilde{A}2$)] $A(x,\xi) \leq \widetilde{C}_1\left[\widetilde{k}(x)+w(x)|\xi|^{p(x)}\right]$
for a.e. $x\in \Omega$ and all $\xi \in \mathbb R^N,$ where $\widetilde{k}:=wk^{p'}\in L^1(\Omega)\cap P_+(\Omega)$ and $\widetilde{C}_1:=2\widetilde{C};$
\end{itemize}
and $(A4)$ implies that
\begin{itemize}
\item[($\widetilde{A}4$)] $\frac{\bar{C}w(x)}{p(x)}|\xi|^{p(x)}\leq A(x,\xi)$
for a.e. $x\in \Omega$ and all $\xi \in \mathbb R^N.$
\end{itemize}

The operator satisfying $(A0)-(A5)$ is of Leray-Lions type and the typical examples are
$$\operatorname{div}\left[w(x) \sum_{i=1}^n |\nabla u|^{p_i(x)-2}\nabla u\right] ~\mbox{and}~ \operatorname{div}\left[w(x)(1+|\nabla u|^2)^{(p(x)-2)/2}\nabla u\right].$$
Let us name the first operator a multiple degenerate $p(x)$-Laplacian and the second one  a degenerate generalized mean curvature operator. Note that if $k(x)$ in $(A2)$ is a positive constant function then $(A2)$ requires $w\in L^1(\Omega).$

Let us denote
\begin{center}
$p_s(x):=\frac{p(x)s(x)}{1+s(x)},$
\end{center}
where $s$ is given in $(w1)$ and
\[
p^{\ast }_{s}(x):=\begin{cases}
\frac{p(x)s(x)N }{(s(x)+1)N-p(x)s(x) } & \text{if }p_{s}(x)<N, \\
+\infty & \text{if }p_{s}(x) \geq N,
\end{cases}
\]
for all $x\in \overline{\Omega}.$ Furthermore, we assume that
\begin{itemize}
\item[($\widetilde{F}1$)] There exists a constant $C>0$ such that $ | f(x,t)| \leq h(x)+C| t| ^{q(x) -1}$
for a.e. $x\in \Omega $ and all $t\in\mathbb{R}$, where $q\in C_{+}(\overline{\Omega }) $ with
$q(x)<p_{s}^{\ast}(x)$ for all $x\in \overline{\Omega }$, and $h\in P_+(\Omega)$ such that $h^{q'}\in L^{1}(\Omega).$
\end{itemize}

Recently, 
 I.H. Kim and Y.-H. Kim \cite{Kim-Kim} considered the problem \eqref{1.1} with $a(x,\xi)=\phi(x,|\xi|)\xi$ which is of type $|\xi|^{p(x)-2}\xi$ (non-degenerate cases) and a growth condition which is a little different from $(\widetilde{F}1)$. Under suitable conditions on $\phi$ that are a special case of our assumptions, they obtained the existence and multiplicity of solutions using the Mountain Pass Theorem and Fountain Theorem. It is worth noting that the main operator in \cite{Kim-Kim} cannot include multiple $p(x)$-Laplace operators.
 Boureanu and Udrea \cite{Boureanu} considered the problem
\begin{eqnarray}\label{1.4}
\begin{cases}
 -\operatorname{div}a(x,\nabla u) + |u|^{p(x)-2}u =\lambda f(x,u) \quad &\text{in } \Omega ,\\
  u=c\ (constant) \quad &\text{on } \partial \Omega ,
  \end{cases}
\end{eqnarray}
where $a$ satisfies $(A0)-(A5)$ with $w \equiv 1$
and showed the existence and multiplicity of solutions for \eqref{1.4} in the case of both $(p(\cdot)-1)$-sublinear at infinity and $(p(\cdot)-1)$-superlinear at infinity of nonlinearity. Moreover, they used several three solutions type theorems to obtain at least three distinct solutions for \eqref{1.4} for the case of $(p(\cdot)-1)$-sublinear at infinity. For a degenerate case, the authors \cite{Ky1} considered the problem
\begin{eqnarray}\label{1.2}
\begin{cases}
 -\operatorname{div}(w(x) |\nabla u|^{p(x)-2}\nabla u) =f(x,u) \quad &\text{in } \Omega ,\\
  u=0\quad &\text{on } \partial \Omega ,
  \end{cases}
\end{eqnarray}
under the condition $(w1)$ and showed that
\begin{itemize}
\item[1)] the existence of non-trivial solutions using the Mountain Pass Theorem when $f$ satisfies $(\widetilde{F}1)$ with $h$ is constant and $q^- > p^+$, the Ambrosetti-Rabinowitz condition (the (AR) condition), $\lim_{t\to 0}\frac{f(x,t)}{|t|^{p^{+}-1}}=0$ uniformly for $x\in \overline{\Omega }$;
\item[2)] the uniqueness of solutions  using the Browder's Theorem when $f$ satisfies $(\widetilde{F}1)$ and is nonincreasing with respect to the second variable;
\item[3)]  the uniqueness and the nonnegativeness of solutions  using cut-off method when $w  \in L^\infty(\Omega), w^{-s^+} \in L^{1}(\Omega);$ the nonlinearity $f$ is continuous, and nonincreasing with respect to the second variable and $f(x,0) \ge 0,$ for all $x \in \overline{\Omega}.$
\end{itemize}

Motivated by the above results,  we shall consider degenerate $p(x)$-Laplace equations \eqref{1.1} with Leray-Lions type operators which generalize the main operators in \cite{Boureanu,Ky1,Kim-Kim} to obtain their results. We also use a three solutions type theorem to obtain the multiplicity of solutions for \eqref{1.1} when the nonlinearity is $(p(\cdot)-1)$-sublinear at infinity as in \cite{Boureanu}. However, since the boundary condition in the present paper is different from theirs, we need to give a new approach to deal with the nontriviality of solutions.

This paper is organized as follows. In Section 2, we review the weighted variable exponent Lebesgue-Sobolev
spaces and list properties of those spaces. In Section 3, we obtain variational principles for our variational settings. In Section 4, we show the existence and multiplicity of solution for \eqref{1.1} in two cases; $(p(\cdot)-1)$-superlinear at infinity and  $(p(\cdot)-1)$-sublinear at infinity using direct methods and critical point theories in Calculus of Variations. The final section is devoted to showing that the unique solution of \eqref{1.1} is nontrivial and nonnegative.

\section{Abstract framework and preliminary results}
In this section, we only review  the weighted variable exponent Lebesgue-Sobolev
spaces $L^{p(x) }(w,\Omega ) $ and $W^{1,p(x)}(w,\Omega ) $, which were studied in \cite{Ky1,Kim}  and for the variable exponent Lebesgue-Sobolev
spaces $L^{p(x) }(\Omega ) $ and $W^{1,p(x)}(\Omega ),$ we refer to \cite{Diening,Kovacik} and the references therein.

Let $p\in C_{+}(\overline{\Omega }) $ and $w\in P_+(\Omega),$ we define the variable exponent Lebesgue space as
\[
L^{p(x) }(w,\Omega ) =\left\{  u:\Omega
\to\mathbb{R}\text{ is measurable, }\int_{\Omega }w(x)| u(x)| ^{p(x) }dx<\infty \right\} .
\]
Then $L^{p(x) }(w,\Omega ) $ endowed with the norm
\[
| u| _{L^{p(x)}(w,\Omega )}=\inf\left\{ \lambda >0:\int_{\Omega }w(x)\Big| \frac{u(x) }{\lambda }
\Big| ^{p(x) }dx\leq 1\right\} ,
\]
becomes a normed space. When $w(x)\equiv 1,$ we have $L^{p(x) }(w,\Omega ) \equiv L^{p(x) }(\Omega )$ and use the notation $| u| _{L^{p(x)}(\Omega )}$ instead of $| u| _{L^{p(x)}(w,\Omega )}$.

The following propositions will be useful for the next sections. 

\begin{proposition}[\cite{Diening,Kovacik}] \label{prop1}
The space $L^{p(x) }(\Omega )$ is a separable and uniformly convex Banach space, and its conjugate space is $L^{p'(x) }(\Omega )$ where  $1/p(x)+1/p'(x)=1$. For any $u\in L^{p(x)}(\Omega)$ and $v\in L^{p'(x)}(\Omega)$, we have
\begin{equation*}
\left|\int_\Omega uv\,dx\right|\leq\left(\frac{1}{p^-}+
\frac{1}{{(p')}^-}\right)|u|_{L^{p(x)}(\Omega )}|v|_{L^{p'(x)}(\Omega )}\leq\ 2 |u|_{L^{p(x)}(\Omega )}|v|_{L^{p'(x)}(\Omega )}.
\end{equation*}
\end{proposition}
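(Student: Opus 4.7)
The plan is to base every assertion on the modular $\rho(u):=\int_\Omega |u(x)|^{p(x)}\,dx$ and to transfer results to the Luxemburg norm via the unit-ball identity $|u|_{L^{p(x)}(\Omega)}\le 1 \iff \rho(u)\le 1$, itself a consequence of the convexity and left-continuity of $\lambda\mapsto \rho(u/\lambda)$. With this bridge, the norm axioms reduce to modular facts: positive definiteness from $p^->1$, homogeneity from the definition, and the triangle inequality from convexity of $\rho$.

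I would prove the H\"older-type inequality next, since completeness leans on it. The pointwise Young inequality $|uv|\le |u|^{p(x)}/p(x)+|v|^{p'(x)}/p'(x)$, applied to $u/|u|_{L^{p(x)}(\Omega)}$ and $v/|v|_{L^{p'(x)}(\Omega)}$ and integrated, delivers the constant $1/p^-+1/(p')^-$ directly, and the factor $2$ bound follows from $p^-,(p')^-\ge 1$. Completeness then follows by the standard route: H\"older against $\mathbf{1}_K$ embeds $L^{p(x)}(\Omega)$ into $L^1_{\mathrm{loc}}(\Omega)$, a Cauchy sequence therefore converges in measure, and Fatou's lemma applied to the modular of a pointwise a.e. convergent subsequence promotes the estimate back to the Luxemburg norm. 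Separability is obtained by approximating elements first by truncations $u\mathbf{1}_{\{|u|\le n\}}$ (modular, hence norm, convergence on bounded sets) and then by rational simple functions over a countable generating algebra. For the dual, I would introduce the canonical map $\Phi:L^{p'(x)}(\Omega)\to (L^{p(x)}(\Omega))^*$ with $\Phi(v)(u):=\int_\Omega uv\,dx$. H\"older gives boundedness, and the test function $u(x):=\operatorname{sgn}(v(x))|v(x)|^{p'(x)-1}$, suitably normalised, provides a matching lower bound. Surjectivity is closed out by a Radon--Nikodym argument: given $L\in (L^{p(x)}(\Omega))^*$, the set function $E\mapsto L(\mathbf{1}_E)$ is a signed measure absolutely continuous with respect to Lebesgue measure, whose density is shown to lie in $L^{p'(x)}(\Omega)$ via a modular estimate tested on simple truncations.

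The \emph{main obstacle} is uniform convexity. The plan is a pointwise Clarkson inequality tailored to the variable exponent: split $\Omega$ into $\Omega_2:=\{x\in\Omega: p(x)\ge 2\}$ and $\Omega_1:=\{x\in\Omega: 1<p(x)<2\}$, apply the classical Clarkson inequalities in each region (the second form being the painful one on $\Omega_1$), and integrate to obtain a modular inequality of the shape $\rho\bigl(\tfrac{u+v}{2}\bigr)+\rho\bigl(\tfrac{u-v}{2}\bigr)\le \tfrac12\bigl(\rho(u)+\rho(v)\bigr)$ on $\Omega_2$, with an analogous weaker version on $\Omega_1$. The unit-ball identity then converts this into the $\varepsilon$--$\delta$ formulation of uniform convexity, with constants controlled by $p^-$ and $p^+$. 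The genuinely delicate point is the bookkeeping on $\Omega_1$ and the merging of the two regional estimates uniformly in $p$; once it is done, reflexivity is a free bonus from Milman--Pettis, complementing the direct duality argument above.
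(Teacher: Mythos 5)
The paper does not actually prove this proposition: it is quoted from the literature (\cite{Diening,Kovacik}), so there is no internal argument to compare yours against. Your outline follows the standard textbook development and most of it is sound: the modular-to-norm bridge, the Young-inequality proof of the H\"older estimate with constant $\frac{1}{p^-}+\frac{1}{(p')^-}\le 2$, completeness via $L^{1}_{loc}$ and Fatou applied to the modular, separability by truncation and simple functions, and the duality argument (whose surjectivity step does require $p^+<\infty$, guaranteed here because $p\in C_+(\overline{\Omega})$ on a compact set).

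The genuine gap is in uniform convexity on $\Omega_1=\{x:1<p(x)<2\}$. The second Clarkson inequality is a \emph{norm} inequality in classical $L^p$: its proof passes through the reverse Minkowski inequality with exponent $p/p'<1$ and uses the homogeneity of the norm and the constancy of the exponent in an essential way. There is a pointwise version, but integrating it produces the modular with exponent $p'(x)$ on the left-hand side, not the modular of $L^{p(x)}(\Omega)$, and the right-hand side $\int_\Omega\bigl(\tfrac12|u|^{p(x)}+\tfrac12|v|^{p(x)}\bigr)^{p'(x)-1}dx$ does not reassemble into anything useful when $p$ varies. So the step ``apply the classical Clarkson inequalities in each region and integrate'' fails on $\Omega_1$; this is more than bookkeeping. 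The standard repair (and the route taken in \cite{Diening}) is to prove uniform convexity of the \emph{modular} directly, using a pointwise estimate valid uniformly for $p\in[p^-,p^+]\subset(1,\infty)$: for every $\varepsilon>0$ there is $\delta>0$ such that $|a-b|\ge\varepsilon\max(|a|,|b|)$ forces $\bigl|\tfrac{a+b}{2}\bigr|^{p}\le(1-\delta)\tfrac{|a|^{p}+|b|^{p}}{2}$; integrating this over the set where the hypothesis holds and handling its complement gives uniform convexity of $\rho$, and $p^+<\infty$ together with the norm--modular unit-ball relation transfers this to the Luxemburg norm. With that substitution your plan closes, and Milman--Pettis then gives reflexivity for free as you say.
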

Define the modular $\rho :L^{p(x) }(w,\Omega )$ $ \to \mathbb{R}$ \  by
\[
\rho (u) =\int_{\Omega }w(x)| u(x)| ^{p(x) }dx,\quad
\forall u\in L^{p(x) }(w,\Omega ) .
\]

\begin{proposition}[\cite{Kim}] \label{prop2}
For all $u\in L^{p(x) }(w,\Omega ),$  we have
\begin{itemize}
\item[(i)] $|u|_{L^{p(x)}(w,\Omega )}<1$ $(=1,>1)$
if and only if \  $\rho (u) <1$ $(=1,>1)$, respectively;

\item[(ii)] If \  $|u|_{L^{p(x)}(w,\Omega )}>1$ then  $|u|^{p^{-}}_{L^{p(x)}(w,\Omega )}\leq \rho (u) \leq |u|_{L^{p(x)}(w,\Omega )}^{p^{+}}$;
\item[(iii)] If \ $|u|_{L^{p(x)}(w,\Omega )}<1$ then $|u|_{L^{p(x)}(w,\Omega )}^{p^{+}}\leq \rho
(u) \leq |u|_{L^{p(x)}(w,\Omega )}^{p^{-}}$.
\end{itemize}
Consequently,
$$|u|_{L^{p(x)}(w,\Omega )}^{p^{-}}-1\leq \rho (u) \leq |u|_{L^{p(x)}(w,\Omega )}^{p^{+}}+1,\ \forall u\in L^{p(x) }(w,\Omega ).$$
\end{proposition}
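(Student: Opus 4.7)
\smallskip
\textbf{Plan.} Write $\|u\|:=|u|_{L^{p(x)}(w,\Omega)}$. The backbone of the whole proposition is the identity
$$\rho\bigl(u/\|u\|\bigr)=1\qquad\text{whenever } u\ne 0,$$
from which (i)--(iii) will tumble out almost mechanically, and the final consequence will just be a combining step. So my first task is to establish this normalization identity; everything else is algebra.

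\smallskip
\textbf{Step 1 (the core identity).} Fix $u\in L^{p(x)}(w,\Omega)$ with $u\ne 0$ and define $\varphi(\lambda):=\rho(u/\lambda)=\int_\Omega w(x)|u(x)|^{p(x)}\lambda^{-p(x)}\,dx$ for $\lambda>0$. I would show that $\varphi$ is (a) finite on some interval and strictly decreasing, (b) continuous, and (c) satisfies $\varphi(\lambda)\to 0$ as $\lambda\to\infty$ and $\varphi(\lambda)\to\infty$ as $\lambda\to 0^+$. Monotonicity and the limit as $\lambda\to 0^+$ come from the pointwise monotonicity of $\lambda\mapsto \lambda^{-p(x)}$ combined with Fatou/monotone convergence; the limit as $\lambda\to\infty$ and continuity both follow from dominated convergence, with the dominating function $w(x)|u(x)/\lambda_0|^{p(x)}\in L^1(\Omega)$ on a neighbourhood $[\lambda_0,\infty)$ of any fixed $\lambda_0$ already inside the set $\{\varphi\le 1\}$. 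Since $\|u\|=\inf\{\lambda>0:\varphi(\lambda)\le 1\}$, the set $\{\varphi\le 1\}$ equals $[\|u\|,\infty)$ and continuity forces $\varphi(\|u\|)=1$.

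\smallskip
\textbf{Step 2 (deducing (ii) and (iii)).} Once the identity is in hand, I rewrite
$$\rho(u)=\int_\Omega w(x)\Bigl|\tfrac{u(x)}{\|u\|}\Bigr|^{p(x)}\|u\|^{p(x)}\,dx.$$
If $\|u\|>1$ then $\|u\|^{p^-}\le\|u\|^{p(x)}\le\|u\|^{p^+}$ pointwise, so pulling these constants out of the integral and using $\rho(u/\|u\|)=1$ gives (ii). If $\|u\|<1$ the inequality on $\|u\|^{p(x)}$ reverses, yielding (iii). No further analytic content is needed here.

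\smallskip
\textbf{Step 3 ((i) and the final consequence).} For (i), the case $\|u\|=1$ gives $\rho(u)=\rho(u/\|u\|)=1$ directly. The forward implications for $<$ and $>$ are immediate from (ii) and (iii) (e.g.\ $\|u\|<1$ forces $\rho(u)\le\|u\|^{p^-}<1$). The converses then follow by eliminating the remaining two cases using the forward implications just proved. Finally, the consequence is a case split: if $\|u\|\ge 1$ use (ii) to get $\rho(u)\le\|u\|^{p^+}\le\|u\|^{p^+}+1$ and $\rho(u)\ge\|u\|^{p^-}\ge\|u\|^{p^-}-1$; if $\|u\|<1$ use (iii) together with $\rho(u)\le\|u\|^{p^-}\le\|u\|^{p^+}+1$ and $\rho(u)\ge 0\ge\|u\|^{p^-}-1$.

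\smallskip
\textbf{Main obstacle.} The only nontrivial point is the continuity of $\varphi$ at $\|u\|$ in Step~1, where I must produce an integrable majorant of $w(x)|u(x)/\lambda|^{p(x)}$ uniformly in $\lambda$ near $\|u\|$. The natural choice is $w(x)|u(x)/\lambda_0|^{p(x)}$ for some $\lambda_0$ slightly below $\|u\|$ on one side and, on the other side, using that $w|u|^{p(x)}$ itself is integrable since $u\in L^{p(x)}(w,\Omega)$. Apart from this DCT bookkeeping (which is standard in the variable-exponent setting and unaffected by the presence of the weight), the argument is entirely elementary.
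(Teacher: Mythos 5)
Your proposal is correct, and it is exactly the classical ``unit ball property'' argument ($\rho(u/\|u\|)=1$ via monotonicity, continuity and the limits of $\lambda\mapsto\rho(u/\lambda)$) that underlies the proof in the cited reference \cite{Kim}; the paper itself states this proposition without proof. The only bookkeeping worth making explicit is that the normalization identity requires $u\ne 0$ (the $u=0$ cases of (i)--(iii) and of the final inequality are trivial) and that the integrable majorant for continuity is simply $w|u|^{p(x)}\max\{\lambda_0^{-p^-},\lambda_0^{-p^+}\}$, which is available because $\rho(u)<\infty$ and $p^+<\infty$.
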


\begin{proposition} [\cite{Ky1}] \label{prop3}
If $u,u_n\in L^{p(x) }(w,\Omega ) $ ($n=1,2,\dots$), then the
following statements are equivalent:
\begin{itemize}
\item[(i)] $\lim_{n\to \infty }|u_n-u|_{L^{p(x)}(w,\Omega )}=0$;

\item[(ii)] $\lim_{n\to \infty }\rho (u_n-u)=0$.

\end{itemize}
\end{proposition}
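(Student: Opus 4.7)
The plan is to leverage the norm--modular dichotomy in Proposition~\ref{prop2}: depending on whether the norm $|u_n-u|_{L^{p(x)}(w,\Omega)}$ is less than or greater than $1$, we get two-sided comparisons between the norm and the modular, both of which pinch the modular between powers of the norm (and vice versa). Since both directions of convergence deal with a sequence tending to $0$, it suffices to work in the small-norm regime, which is where the asymptotic behaviour lives.

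For (i)$\Rightarrow$(ii), I would argue as follows. Assume $|u_n-u|_{L^{p(x)}(w,\Omega)}\to 0$. Then for all sufficiently large $n$ one has $|u_n-u|_{L^{p(x)}(w,\Omega)}<1$, and Proposition~\ref{prop2}(iii) yields
\[
\rho(u_n-u)\leq |u_n-u|_{L^{p(x)}(w,\Omega)}^{p^-}.
\]
Since $p^->0$, the right-hand side tends to $0$, proving $\rho(u_n-u)\to 0$.

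For (ii)$\Rightarrow$(i), assume $\rho(u_n-u)\to 0$. Then eventually $\rho(u_n-u)<1$, so by Proposition~\ref{prop2}(i) we have $|u_n-u|_{L^{p(x)}(w,\Omega)}<1$ for all sufficiently large $n$. Applying Proposition~\ref{prop2}(iii) on this range gives
\[
|u_n-u|_{L^{p(x)}(w,\Omega)}^{p^+}\leq \rho(u_n-u),
\]
so $|u_n-u|_{L^{p(x)}(w,\Omega)}\leq \rho(u_n-u)^{1/p^+}\to 0$, as desired.

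There is essentially no obstacle here beyond correctly invoking the right side of the inequality chain in the right regime; the mild subtlety is that (ii)$\Rightarrow$(i) requires the intermediate step of first using Proposition~\ref{prop2}(i) to push the norm below $1$ (so that Proposition~\ref{prop2}(iii), rather than (ii), applies), after which the bound by $\rho(u_n-u)^{1/p^+}$ closes the argument.
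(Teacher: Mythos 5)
Your proof is correct and is exactly the standard argument: the paper itself omits the proof of Proposition~\ref{prop3}, deferring to \cite{Ky1}, where the equivalence is established by the same norm--modular comparison via the inequalities of Proposition~\ref{prop2}. Both implications, including the intermediate use of Proposition~\ref{prop2}(i) to place the norm below $1$ before invoking (iii), are handled correctly.
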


The weighted variable exponent Sobolev space $W^{1,p(x)}(w,\Omega) $ is defined by
\[
W^{1,p(x)}(w,\Omega ) =\{u\in L^{p(x) }(\Omega) :
|\nabla u|\in L^{p(x) }(w,\Omega ) \},
\]
with the norm
\[
\|u\|_{W^{1,p(x)}(w,\Omega )}=|u|_{L^{p(x)}(\Omega )}+\big||\nabla u|\big|_{L^{p(x)}(w,\Omega )}.
\]
$W^{1,p(x)}_{0}(w, \Omega ) $ is defined as the closure of $C_0^{\infty }(\Omega )$
in $W^{1,p(x)}(w,\Omega )$ with respect to
the norm $\| \cdot\| _{W^{1,p(x)}(w,\Omega)}$.

The separability of $W^{1,p(x) }(w,\Omega ) $ is required for Fountain Theorem.

\begin{proposition} \label{prop5}
Assume that $(w1)$ holds. Then $W^{1,p(x) }(w,\Omega ) $ is a separable reflexive Banach space.
\end{proposition}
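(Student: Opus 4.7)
The plan is to identify $W^{1,p(x)}(w,\Omega)$ with a closed subspace of a product of Lebesgue-type spaces via the natural isometry, and to transfer separability and reflexivity from the product. Set
\[
X := L^{p(x)}(\Omega) \times \bigl[L^{p(x)}(w,\Omega)\bigr]^{N}
\]
with the product norm $\|(u,v_1,\dots,v_N)\|_X := |u|_{L^{p(x)}(\Omega)} + \sum_{i=1}^N |v_i|_{L^{p(x)}(w,\Omega)}$, and define $T:W^{1,p(x)}(w,\Omega)\to X$ by $Tu := (u,\partial_{x_1}u,\dots,\partial_{x_N}u)$. Since the Euclidean norm on $\mathbb{R}^N$ is equivalent to the $\ell^1$ norm of the coordinates, $T$ is a topological isomorphism onto its image.

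The first preparatory step is to observe that $L^{p(x)}(w,\Omega)$ itself is separable and reflexive. The multiplication map $\Phi_w(u) := w^{1/p(x)}u$ preserves the modular, hence by the Luxemburg formula is an isometric isomorphism of $L^{p(x)}(w,\Omega)$ onto $L^{p(x)}(\Omega)$; Proposition \ref{prop1} then transfers separability and uniform convexity (and thus reflexivity) to $L^{p(x)}(w,\Omega)$. Consequently the finite product $X$ is a separable reflexive Banach space.

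The heart of the argument is to show $T(W^{1,p(x)}(w,\Omega))$ is closed in $X$. Given $(u_n,\nabla u_n)\to(u,v)$ in $X$, I must verify $v=\nabla u$ in the distributional sense. The key ingredient is the continuous embedding $L^{p(x)}(w,\Omega)\hookrightarrow L^{1}_{\mathrm{loc}}(\Omega)$, which follows from Proposition \ref{prop1}: for any compact set $K\subset\Omega$ and any $\varphi\in L^{p(x)}(w,\Omega)$,
\[
\int_K|\varphi|\,dx = \int_K |\varphi|\,w^{1/p(x)}\cdot w^{-1/p(x)}\,dx \le 2\,|\varphi|_{L^{p(x)}(w,\Omega)}\,\bigl|w^{-1/p(x)}\bigr|_{L^{p'(x)}(K)}.
\]
The second factor is finite because its modular equals $\int_K w^{-1/(p(x)-1)}\,dx$, and the pointwise bound $w^{-1/(p(x)-1)}\le 1+w^{-s(x)}$ (from splitting according to $w\le 1$ vs.\ $w>1$, using $s(x)\ge 1/(p(x)-1)$ in $(w1)$) together with $w^{-s}\in L^{1}(\Omega)$ yields integrability. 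Hence $u_n\to u$ and $\nabla u_n\to v$ in $L^{1}_{\mathrm{loc}}(\Omega)$; passing to the limit in $\int u_n\,\partial_i\varphi\,dx = -\int(\partial_i u_n)\,\varphi\,dx$ for every $\varphi\in C_0^\infty(\Omega)$ gives $v=\nabla u$, so $(u,v)\in T(W^{1,p(x)}(w,\Omega))$. A closed subspace of a separable reflexive Banach space inherits both properties, completing the proof.

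I expect the main obstacle to be precisely this closedness step—extracting enough integrability of $w^{-1/(p(x)-1)}$ from condition $(w1)$ so that the variable-exponent Hölder inequality delivers the $L^{1}_{\mathrm{loc}}$-embedding. Once that embedding is established, everything else is a routine transfer through the two isometries $\Phi_w$ and $T$.
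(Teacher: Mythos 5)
Your proposal is correct and follows essentially the same route as the paper: the multiplication map $u\mapsto w^{1/p}u$ (which the paper uses implicitly to transport a countable dense subset of $L^{p(x)}(\Omega)$) identifies $L^{p(x)}(w,\Omega)$ with $L^{p(x)}(\Omega)$, and $W^{1,p(x)}(w,\Omega)$ is realized via $T$ as a closed subspace of the product $L^{p(x)}(\Omega)\times\bigl[L^{p(x)}(w,\Omega)\bigr]^{N}$, with closedness coming from passing to the limit in the weak-derivative identity through a local $L^{1}$-embedding. The only real difference is that you derive reflexivity and the embedding $L^{p(x)}(w,\Omega)\hookrightarrow L^{1}_{\mathrm{loc}}(\Omega)$ directly from $(w1)$ (via the pointwise bound $w^{-1/(p(x)-1)}\leq 1+w^{-s(x)}$), whereas the paper cites both facts from Kim--Wang--Zhang, so your argument is somewhat more self-contained but not structurally different.
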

\begin{proof}
By Theorem 2.10 in \cite{Kim}, we have that $W^{1,p(x) }(w,\Omega ) $ is a reflexive Banach space. The only thing left to prove is the fact that $W^{1,p(x) }(w,\Omega ) $ is separable. We first show that $L^{p(x)}(w,\Omega)$ is separable. It is well-known that $L^{p(x)}(\Omega)$ is separable so there exists a countable subset $\mathcal{F}$ of $L^{p(x)}(\Omega)$ such that $\mathcal{F}$ is dense in $L^{p(x)}(\Omega)$. Let $\mathcal{F}_w=\{w^{-\frac{1}{p}}f: f\in \mathcal{F}\}$. It is clear that $\mathcal{F}_w$ is a countable subset of $L^{p(x)}(w,\Omega)$. For any $f\in L^{p(x)}(w,\Omega)$, we have  $w^{\frac{1}{p}}f \in L^{p(x)}(\Omega)$. So there exists a sequence $\{f_n\}_{n=1}^{\infty}\subset \mathcal{F} $ such that
$$|f_n-w^{\frac{1}{p}}f|_{L^{p(x)}{(\Omega)}}\to 0 \ \ \text{as}  \ \ n \to \infty.$$
By this and Proposition \ref{prop3}, we have
$$\int_{\Omega}|f_n(x)-w(x)^{\frac{1}{p(x)}}f(x)|^{p(x)}dx=\int_{\Omega}w(x)|w(x)^{-\frac{1}{p(x)}}f_n(x)-f(x)|^{p(x)}dx\to 0 $$
as $n \to \infty.$ Equivalently,
$$|w^{-\frac{1}{p}}f_n-f|_{L^{p(x)}{(w,\Omega)}}\to 0 \ \ \text{as}  \ \ n \to \infty.$$
Note that $\{w^{-\frac{1}{p}}f_n\}_{n=1}^{\infty}\subset \mathcal{F}_w $. This implies the separability of $L^{p(x)}(w,\Omega)$.

We next show the separability of $W^{1,p(x)}(w,\Omega)$. We have that the product space $Y:=L^{p(x)}(\Omega)\times \left(L^{p(x)}(w,\Omega)\right)^N=L^{p(x)}(\Omega)\times L^{p(x)}(w,\Omega)\times \cdots \times L^{p(x)}(w,\Omega)$ is a separable Banach space with 
an equivalent norm
$$|(u_0,u_1,\cdots,u_N)|_{Y}=|u_0|_{L^{p(x)}(\Omega)}+\left|\left(\sum_{i=1}^{N}u_i^2\right)^{1/2}\right|_{L^{p(x)}(w,\Omega)}$$
Consider the operator
$$ T: W^{1,p(x)}(w,\Omega) \to (Y,|\cdot|_Y),\ Tu=(u,u_{x_1},\cdots ,u_{x_N}).$$
Then $T$ is a linear isometric operator.
Obviously, $T\left(W^{1,p(x)}(w,\Omega)\right)$ is closed in $Y$. Indeed, if $\{u_n\}\subset W^{1,p(x)}(w,\Omega)$ and $Tu_n \to v=(v_0,v_1,\cdots,v_N)$ as $n \to \infty$ in $Y$ i.e.,
\begin{equation} \label{convergence}
\begin{cases}
u_n \to v_0 \ \ &\text {in} \ \ L^{p(x)}(\Omega),\\
\frac{\partial u_n}{\partial x_i}\to v_i \ \ &\text {in} \ \ L^{p(x)}(w,\Omega)
\end{cases}
\end{equation}
as $n \to \infty$ for each $i=1,\cdots ,N$. Then we have that $\{u_n\}$ is a Cauchy sequence in $W^{1,p(x)}(w,\Omega)$ so there exists $u \in W^{1,p(x)}(w,\Omega)$ such that $u_n \to u$ as $n \to \infty$ in $W^{1,p(x)}(w,\Omega)$ and hence, $v_0=u$. We now show that $v_i=\frac{\partial u}{\partial x_i}$. In fact, for any $\phi \in C_0^{\infty }(\Omega )$ we have
\begin{equation}\label{weakderivative}
\int_{\Omega}\frac{\partial u_n}{\partial x_i}\phi dx=-\int_{\Omega}u_n\frac{\partial \phi }{\partial x_i} dx.
\end{equation}
It follows from \eqref{convergence} that $\int_{\Omega}u_n\frac{\partial \phi }{\partial x_i} dx \to \int_{\Omega}u \frac{\partial \phi }{\partial x_i} dx $ as $n \to \infty$. Let $\Omega_0=\operatorname {supp}(\phi)$ then $L^{p(x)}(w, \Omega) \hookrightarrow L^1(\Omega_0)$ (see \cite [Proposition 2.8]{Kim}). So \eqref{convergence} implies that $\frac{\partial u_n}{\partial x_i}\to v_i$ in $L^1(\Omega_0)$. Hence, we infer $\int_{\Omega}\frac{\partial u_n}{\partial x_i}\phi dx \to \int_{\Omega}v_i\phi dx $ as $n\to \infty$ since $$\left|\int_{\Omega}\frac{\partial u_n}{\partial x_i}\phi dx-\int_{\Omega}v_i\phi dx\right|\leq \int_{\Omega}\left|\frac{\partial u_n}{\partial x_i}-v_i\right||\phi| dx\leq \|\phi\|_{\infty}\int_{\Omega_0}\left|\frac{\partial u_n}{\partial x_i}-v_i\right| dx.$$
Letting  $n\to \infty,$ we obtain from these facts and \eqref{weakderivative}  that
\begin{equation*}
\int_{\Omega}v_i\phi dx=-\int_{\Omega}u\frac{\partial \phi }{\partial x_i} dx.
\end{equation*}
So $v_i=\frac{\partial u}{\partial x_i}$ and hence, $v=Tu$ \ i.e.,\ $v\in T\left(W^{1,p(x)}(w,\Omega)\right)$. So $T\left(W^{1,p(x)}(w,\Omega)\right)$ is closed in $Y$. It implies that $T\left(W^{1,p(x)}(w,\Omega)\right)$ is separable and hence, so is
$W^{1,p(x)}(w,\Omega)$.
\end{proof}

The next imbedding result will be used in the later sections frequently.

\begin{proposition} [\cite{Kim}] \label{prop6}
Assume that $(w1)$ holds. If $q\in C_{+}(\overline{\Omega })$ and
$q(x) < p^{\ast }_{s}(x) $ for all $x\in \overline{\Omega }$,
then we obtain the continuous  compact imbedding
\begin{center}
$W^{1,p(x)}(w,\Omega) \hookrightarrow \hookrightarrow L^{q(x) }(\Omega ) .$
\end{center}
\end{proposition}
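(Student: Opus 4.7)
The plan is to reduce the weighted embedding to the classical (unweighted) variable exponent Sobolev embedding by absorbing the weight using the integrability hypothesis $w^{-s}\in L^{1}(\Omega)$ from $(w1)$. The target space will be $W^{1,p_{s}(x)}(\Omega)$, and the exponent $p_s(x)=p(x)s(x)/(1+s(x))$ is exactly designed so that the critical Sobolev exponent of $p_s$ equals $p_s^{\ast}(x)$.

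First I would establish the pointwise inequality
\[
|\nabla u|^{p_{s}(x)}=\bigl[w(x)|\nabla u|^{p(x)}\bigr]^{\alpha(x)}w(x)^{-\alpha(x)},\qquad \alpha(x):=\tfrac{s(x)}{1+s(x)},
\]
and apply Young's inequality with exponents $1/\alpha(x)$ and $1/(1-\alpha(x))$ to get
\[
|\nabla u|^{p_{s}(x)}\le \alpha(x)\,w(x)|\nabla u|^{p(x)}+(1-\alpha(x))\,w(x)^{-s(x)}
\]
for a.e.\ $x\in\Omega$. Integrating and using $(w1)$, this shows $|\nabla u|\in L^{p_{s}(x)}(\Omega)$ whenever $|\nabla u|\in L^{p(x)}(w,\Omega)$, and gives a modular (hence, via Proposition \ref{prop2}, a norm) bound
\[
\bigl||\nabla u|\bigr|_{L^{p_{s}(x)}(\Omega)}\le C_{1}\Bigl(\bigl||\nabla u|\bigr|_{L^{p(x)}(w,\Omega)}+1\Bigr)
\]
with $C_1$ depending only on $p$, $s$, and $|w^{-s}|_{L^{1}(\Omega)}$. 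Since $p_{s}(x)\le p(x)$ and $\Omega$ is bounded, one also has the trivial embedding $L^{p(x)}(\Omega)\hookrightarrow L^{p_{s}(x)}(\Omega)$. Combining these, I obtain the continuous embedding
\[
W^{1,p(x)}(w,\Omega)\hookrightarrow W^{1,p_{s}(x)}(\Omega).
\]

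Second, I would invoke the classical compact embedding for (unweighted) variable exponent Sobolev spaces: since $p_{s}\in C_{+}(\overline{\Omega})$ (continuity and the requirements $p_{s}>1$, the latter being equivalent to $s(x)\ge 1/(p(x)-1)$ in $(w1)$), and since the Sobolev critical exponent of $p_{s}$ at $x$ is exactly $p_{s}^{\ast}(x)$ (with the convention $+\infty$ when $p_{s}(x)\ge N$, which is consistent with $s(x)>N/p(x)$), the hypothesis $q(x)<p_{s}^{\ast}(x)$ on $\overline{\Omega}$ yields
\[
W^{1,p_{s}(x)}(\Omega)\hookrightarrow\hookrightarrow L^{q(x)}(\Omega).
\]
Composing with the continuous embedding from the first step gives the claim.

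The only real obstacle is the passage from the Young-type modular bound to a genuine norm inequality, because the variable exponent $p_s(x)$ forces one to work with modulars and to distinguish the cases $||\nabla u|\,|_{L^{p(x)}(w,\Omega)}\lessgtr 1$ via Proposition \ref{prop2}; once that is handled, both the continuous embedding into $W^{1,p_{s}(x)}(\Omega)$ and the subsequent compact embedding are essentially black-boxed from the classical variable exponent theory cited in the paper (\cite{Diening,Kovacik}) combined with the hypothesis $(w1)$ chosen precisely to keep $p_s$ admissible.
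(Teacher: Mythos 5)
The paper does not prove this proposition: it is imported verbatim from \cite{Kim}, so there is no in-paper argument to compare against. Your proof reconstructs exactly the standard argument behind that citation: the identity $|\nabla u|^{p_s(x)}=\bigl[w(x)|\nabla u|^{p(x)}\bigr]^{\alpha(x)}w(x)^{-\alpha(x)}$ with $\alpha=s/(1+s)$, Young's inequality giving $|\nabla u|^{p_s(x)}\le \alpha(x)w(x)|\nabla u|^{p(x)}+(1-\alpha(x))w(x)^{-s(x)}$, integration against $w^{-s}\in L^1(\Omega)$ to get the continuous embedding $W^{1,p(x)}(w,\Omega)\hookrightarrow W^{1,p_s(x)}(\Omega)$ (the passage from modular to norm via Proposition \ref{prop2} works since a linear map bounded on the unit ball is continuous), and then the unweighted compact Sobolev embedding for the exponent $p_s$, whose critical exponent is indeed $p_s^{\ast}$; all of this checks out. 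The one inaccuracy is your claim that $p_s(x)>1$ is equivalent to $s(x)\ge 1/(p(x)-1)$: strict positivity of $p_s-1$ requires the strict inequality $s(x)>1/(p(x)-1)$, while $(w1)$ permits equality, at which points $p_s(x)=1$ and $p_s\notin C_+(\overline{\Omega})$, so the black-boxed unweighted embedding theorems do not literally apply in that borderline case. This endpoint issue is inherited from the hypotheses of the cited reference and does not affect the substance of your argument away from it.
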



\section{Variational settings}


Throughout the paper, let us denote $X:=W^{1,p(x)}_{0}(w,\Omega)$ and on  $X$ we have an equivalent norm $\| u\| = \big| | \nabla u| \big|_{L^{p(x)}(w,\Omega)}$
(see \cite[Corollary 2.12]{Kim}). Using the same argument as in the proof of Theorem 4.1 of \cite{Le}
we have the next lemma, the so called $(S_+)$-property which will be used to show compactness.
\begin{lemma} \label{S+fora}
Assume that $(w1),(A2)-(A4)$ hold. If \ $u_{n}\rightharpoonup u$ (weakly) as $n\to\infty$ in  $X$ and
$$\limsup_{n\to  \infty } \int_{\Omega }a(x,\nabla u_n)\cdot(\nabla u_{n}-\nabla u) dx\leq 0$$  then  $u_{n}\to  u$ (strongly) as $n\to\infty$ in $X$.
\end{lemma}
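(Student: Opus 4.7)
The plan is a classical $(S_+)$-type argument adapted to the weighted variable-exponent setting. Set $F_n(x) := [a(x,\nabla u_n) - a(x,\nabla u)] \cdot (\nabla u_n - \nabla u)$, which is $\ge 0$ a.e.\ by $(A3)$, and decompose
\[
\int_\Omega F_n\, dx = \int_\Omega a(x,\nabla u_n) \cdot (\nabla u_n - \nabla u)\, dx - \int_\Omega a(x,\nabla u) \cdot (\nabla u_n - \nabla u)\, dx.
\]
The first integral has $\limsup \le 0$ by hypothesis. For the second, $(A2)$ and a generalized H\"older-type inequality (in the spirit of Proposition \ref{prop1}, after writing $a(x,\nabla u)\cdot\nabla\phi = [w^{-1/p}a(x,\nabla u)] \cdot [w^{1/p}\nabla\phi]$) show that $\phi \mapsto \int_\Omega a(x,\nabla u) \cdot \nabla \phi\, dx$ is a bounded linear functional on $X$, so $u_n \rightharpoonup u$ in $X$ drives this integral to $0$. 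Hence $\int_\Omega F_n\, dx \to 0$, and since $F_n \ge 0$, a subsequence (still denoted $F_n$) converges to $0$ a.e.\ in $\Omega$.

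Next I recover pointwise convergence $\nabla u_n(x) \to \nabla u(x)$ a.e. Fix such an $x$. If $\{\nabla u_n(x)\}$ were unbounded, then along a sub-subsequence $|\nabla u_n(x)| \to \infty$; by $(A4)$ the term $a(x,\nabla u_n) \cdot \nabla u_n \ge \bar C w(x)|\nabla u_n|^{p(x)}$ would dominate the remaining cross terms in $F_n(x)$, which are controlled by $(A2)$ through expressions of order $|\nabla u_n|^{p(x)-1}$. This would force $F_n(x) \to \infty$, contradicting $F_n(x) \to 0$. Hence $\{\nabla u_n(x)\}$ is bounded, and by continuity of $a(x,\cdot)$ (from $(A1)$) any cluster point $\xi^*$ satisfies $[a(x,\xi^*) - a(x,\nabla u(x))] \cdot (\xi^* - \nabla u(x)) = 0$, forcing $\xi^* = \nabla u(x)$ by the strict inequality in $(A3)$.

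The remaining step, and the main obstacle, is to upgrade this a.e.\ convergence to the modular convergence $\int_\Omega w\,|\nabla u_n - \nabla u|^{p(x)} dx \to 0$, which is equivalent to strong convergence in $X$ by Proposition \ref{prop3}. Combining $(A4)$, $(A2)$, and Young's inequality with a small absorption parameter, I would derive a pointwise bound
\[
w(x)\,|\nabla u_n(x)|^{p(x)} \le C_1 F_n(x) + G(x),
\]
with $G\in L^1(\Omega)$ assembled from $w k^{p'(x)}$ and $w|\nabla u|^{p(x)}$. Consequently $w\,|\nabla u_n - \nabla u|^{p(x)} \le g_n$ for a suitable $g_n$ with $g_n \to g$ in $L^1(\Omega)$ (because $F_n \to 0$ in $L^1$), so a generalized dominated convergence theorem, together with the a.e.\ convergence of $\nabla u_n$, yields $\int_\Omega w\,|\nabla u_n - \nabla u|^{p(x)} dx \to 0$ along this subsequence, and the usual subsequence principle extends it to the full sequence. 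The delicate point is precisely the construction of the dominating function whose $L^1$-limit is driven by $\int F_n \to 0$; the weighted variable exponent rules out a direct appeal to Lebesgue dominated convergence.
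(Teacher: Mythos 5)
Your argument is correct and is essentially the same classical $(S_+)$ proof for Leray--Lions operators that the paper itself invokes by citing Theorem 4.1 of \cite{Le} rather than writing out: split off $\int_\Omega a(x,\nabla u)\cdot(\nabla u_n-\nabla u)\,dx$ (which vanishes by weak convergence since $(A2)$ makes it a bounded linear functional on $X$), deduce $\int_\Omega F_n\,dx\to 0$ and hence a.e.\ convergence of the gradients along a subsequence, and close with a generalized dominated convergence argument whose dominating functions are assembled from $F_n$, $wk^{p'}$ and $w|\nabla u|^{p(x)}$. I see no gaps; the absorption via Young's inequality and the final appeal to Proposition \ref{prop3} plus the subsequence principle all go through as you indicate.
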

Let us define $\Phi, \Psi, J:X \to\mathbb{R}$ by
\begin{equation}\label{form_of_oper}
\Phi(u) =\int_{\Omega }A(x,\nabla u)dx,\ \Psi (u)=\int_{\Omega}F(x,u)dx,  \ \ \text{and} \ \ J=\Phi- \lambda\Psi
\end{equation}
with $ F(x,t)=\int_0^{t}f(x,s)ds.$ Then we have fundamental results for $\Phi$ and $\Psi.$ 

\begin{lemma} \label{diff}
\begin{itemize}
\item[(i)]
Assume that $(w1), (A1), (A2)$ hold. Then $\Phi \in C^{1}(X,\mathbb{R})$ and
$$
\langle \Phi'(u) ,\upsilon \rangle =\int_{\Omega
}a(x,\nabla u)\cdot \nabla
\upsilon\ dx,\ \ \text{for any}\ \ u,\upsilon \in X.$$
If in addition $(A3)-(A4)$ hold then $\Phi': X \to X^\ast$ is a homeomorphism  with a bounded inverse.
\item[(ii)]Assume that $(w1), (\widetilde{F}1)$ hold.
Then $\Psi\in C^{1}(X,\mathbb{R})$ and
$$
\langle \Psi'(u) ,\upsilon \rangle =\int_{\Omega }f(x,u)\upsilon\ dx, \ \ \text{for any} \ \ u,\upsilon \in X.$$
Moreover, $\Psi$ and $\Psi'$   are sequentially weakly continuous, i.e., \ $u_{n}\rightharpoonup u$ (weakly) as $n\to\infty$ in $X$ implies $\Psi(u_n) \to \Psi(u)$  and $\Psi'(u_n) \to \Psi'(u)$ as $n\to\infty$ in $\mathbb{R}$ and $X^\ast,$ respectively.
\end{itemize}

\end{lemma}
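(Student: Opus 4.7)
For part (i), the plan is the standard Leray-Lions scheme adapted to weighted variable exponent spaces. First, I verify well-definedness: using the representation \eqref{formA} together with the growth bound $(A2)$, the variable-exponent Hölder inequality (Proposition \ref{prop1}) in $L^{p(x)}(w,\Omega)$ versus $L^{p'(x)}(w,\Omega)$, and the integrability $wk^{p'}\in L^1(\Omega)$, one gets $\int_\Omega A(x,\nabla u)\,dx<\infty$ for every $u\in X$. For the Gâteaux derivative, I differentiate $A(x,\nabla u+t\nabla v)$ in $t$ via $(A1)$ and pass $t\to 0$ through the integral by dominated convergence, with majorant supplied by $(A2)$. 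To upgrade to $\Phi\in C^1$, I show the Nemytskii-type map $u\mapsto a(\cdot,\nabla u)$ is continuous from $X$ into $(L^{p'(x)}(w,\Omega))^N$: if $u_n\to u$ in $X$, Proposition \ref{prop3} passes to a subsequence with $\nabla u_n\to\nabla u$ a.e.; the Carathéodory property of $a$ gives $a(\cdot,\nabla u_n)\to a(\cdot,\nabla u)$ a.e.; the $(A2)$ envelope and a generalized dominated convergence argument yield modular convergence, which Proposition \ref{prop3} converts to norm convergence; and Hölder's inequality then delivers $\Phi'(u_n)\to\Phi'(u)$ in $X^*$.

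For the homeomorphism claim under $(A3)$-$(A4)$: $(A3)$ provides strict monotonicity, hence injectivity of $\Phi'$; while $(A4)$ combined with Proposition \ref{prop2} gives the coercivity estimate $\langle\Phi'(u),u\rangle\ge\bar{C}(\|u\|^{p^-}-1)$. The Browder-Minty theorem, applied on the reflexive space $X$ (Proposition \ref{prop5}), then yields surjectivity. For continuity of $(\Phi')^{-1}$, let $\Phi'(u_n)\to\Phi'(u)$ in $X^*$; coercivity bounds $\{u_n\}$, so a subsequence satisfies $u_{n_k}\rightharpoonup v$, and
$$\langle\Phi'(u_{n_k}),u_{n_k}-v\rangle=\langle\Phi'(u_{n_k})-\Phi'(u),u_{n_k}-v\rangle+\langle\Phi'(u),u_{n_k}-v\rangle\to 0.$$
Lemma \ref{S+fora} (the $(S_+)$-property) upgrades this to $u_{n_k}\to v$ strongly in $X$; continuity of $\Phi'$ forces $v=u$, and a standard subsequence principle shows the full sequence converges. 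Bounded inverse then follows from the coercivity bound.

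For part (ii), the key tool is the compact embedding $X\hookrightarrow\hookrightarrow L^{q(x)}(\Omega)$ from Proposition \ref{prop6}. Condition $(\widetilde{F}1)$ with $h^{q'}\in L^1(\Omega)$ makes the Nemytskii operator $u\mapsto f(\cdot,u)$ continuous from $L^{q(x)}(\Omega)$ into $L^{q'(x)}(\Omega)$, and $u\mapsto F(\cdot,u)$ continuous from $L^{q(x)}(\Omega)$ into $L^1(\Omega)$. Standard Hölder estimates via Proposition \ref{prop1} then deliver well-definedness, Gâteaux differentiability, and the $C^1$ property with the stated derivative formula. For sequential weak continuity: $u_n\rightharpoonup u$ in $X$ implies $u_n\to u$ in $L^{q(x)}(\Omega)$ by compactness, so $F(\cdot,u_n)\to F(\cdot,u)$ in $L^1(\Omega)$ (giving $\Psi(u_n)\to\Psi(u)$) and $f(\cdot,u_n)\to f(\cdot,u)$ in $L^{q'(x)}(\Omega)$; then for any $v\in X$ with $\|v\|\le 1$,
$$|\langle\Psi'(u_n)-\Psi'(u),v\rangle|\le 2|f(\cdot,u_n)-f(\cdot,u)|_{L^{q'(x)}(\Omega)}\,|v|_{L^{q(x)}(\Omega)},$$
and the continuous embedding $X\hookrightarrow L^{q(x)}(\Omega)$ bounds $|v|_{L^{q(x)}(\Omega)}$, so taking the supremum over $v$ yields $\Psi'(u_n)\to\Psi'(u)$ in $X^*$.

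The main technical subtlety is the Nemytskii continuity argument in (i): weighted variable exponent Lebesgue spaces do not directly admit the classical $L^p$ dominated convergence theorem, so one must work modularly and convert back to norm convergence via Proposition \ref{prop3}, taking care that the weight $w$ from $(w1)$ may be neither bounded above nor away from zero. The homeomorphism conclusion also depends essentially on the imported $(S_+)$-property of Lemma \ref{S+fora}, without which strict monotonicity alone would only give weak continuity of the inverse.
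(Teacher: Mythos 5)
Your proposal is correct and follows essentially the same route as the paper: the authors simply cite \cite{Ky1} and \cite{Ky2} for the $C^{1}$ and sequential weak-continuity parts that you write out in detail, and they prove the homeomorphism claim exactly as you do, via Browder's theorem on the reflexive space $X$ for the bounded inverse together with the $(S_+)$-property of Lemma~\ref{S+fora} (and strict monotonicity from $(A3)$) for the continuity of that inverse. The only imprecision is that the natural target of the Nemytskii map $u\mapsto a(\cdot,\nabla u)$ is the dual-type space $\left(L^{p'(x)}(w^{1-p'(x)},\Omega)\right)^N$ rather than $\left(L^{p'(x)}(w,\Omega)\right)^N$ --- from $(A2)$ one gets $\int_\Omega w^{1-p'(x)}|a(x,\nabla u)|^{p'(x)}dx\leq C\int_\Omega w\left(k^{p'}+|\nabla u|^{p(x)}\right)dx<\infty$, not the corresponding bound with weight $w$ --- but your modular-convergence and H\"older arguments go through verbatim after this correction.
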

\begin{proof}
By modifying the proof of Lemma 3.1 in \cite{Ky1}, we can easily obtain the differentiability of $\Phi$ and $\Psi$ and their derivative formulas.

In the case of (i), if we assume in addition that $(A3)-(A4)$ hold then $L:=\Phi': X \to X^\ast$ is strictly monotone, coercive, and continuous on $X$. Therefore  by invoking the Browder's theorem for monotone operators in the reflexive Banach spaces (see {\cite[Theorem 26.A]{Zeidler}}), we deduce that $L $ has a bounded inverse $L^{-1}: X^\ast \to X$. Let $f_n \to f$ as $n\to\infty$ in $X^\ast$ and set $u_n=L^{-1}(f_n),u=L^{-1}(f),$ i.e., $f_n=L(u_n), f=L(u)$. 
Then the boundedness of $L^{-1}$ and $\{f_n\}$ imply that $\{u_n\}$ is bounded. So up to a subsequence, we have
$u_{n}\rightharpoonup \bar{u}$ (weakly) as $n\to\infty$ in $X.$ By this and the estimate below
$$|\langle f_n-f,u_n-u\rangle|\leq \|f_n-f\|_{X^\ast}\|u_n-u\|,$$
we infer that $\lim_{n\to  \infty }  \langle L(u_n) ,u_n-u \rangle=
\lim_{n\to  \infty }  \langle f_n ,u_n-u \rangle=\lim_{n\to  \infty }\langle f_n-f,u_n-u\rangle=0,$
i.e., $\lim_{n\to  \infty }\int_{\Omega }a(x,\nabla u_n)\cdot(\nabla u_{n}-\nabla u) dx= 0.$
 This implies that $u_{n} \to \bar{u}$ (strongly) as $n\to\infty$ in $X$ in the view of Lemma~\ref{S+fora}. This yields $f_n=L(u_n) \to L(\bar{u})$ and thus, $f=L(\bar{u})$. By the strict monotonicity of $L$ we obtain $u=\bar{u}$ and hence, $L^{-1}(f_n)\to L^{-1}(f),$ i.e., $L^{-1}$ is continuous on $X^\ast$.

Areguments for showing the sequentially weak continuity of $\Psi$ and $\Psi'$ in the case of (ii) are standard (see e.g., \cite[Proof of Proposition 2.9]{Ky2}), we omit it.
\end{proof}

Lemma~\ref{diff} implies that when $(w1), (A1), (A2)$ and  $(\widetilde{F}1)$ hold, $J\in C^{1}(X,\mathbb{R})$. On more assumptions, we have the following result for $J$ and $J'$. The proof is easily obtained from Lemmas~\ref{S+fora} and \ref{diff}.


\begin{lemma} \label{S+}
Assume that $(w1),(A1)-(A4)$ and $(\widetilde{F}1)$ hold. Then for every $\lambda\in \mathbb{R},$ $J$ is
sequentially weakly lower semicontinuous and the derivative $J'$ is an $(S_+)$ type operator, i.e., if \ $u_{n}\rightharpoonup u$ (weakly) in  $X$ and $\limsup_{n\to  \infty }  \langle J'(u_n) ,u_n-u \rangle\leq 0$  then  $u_{n}\to  u$ (strongly) in $X$.
\end{lemma}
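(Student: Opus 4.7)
The plan is to decompose the two assertions and reduce each to the already-established lemmas.

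For the sequential weak lower semicontinuity, I would first argue that $\Phi$ is convex. By $(A1)$, $A(x,\cdot)$ is continuously differentiable with $\nabla_\xi A(x,\xi) = a(x,\xi)$, and by $(A3)$ the map $a(x,\cdot)$ is strictly monotone. A $C^1$ function on $\mathbb{R}^N$ whose gradient is monotone is convex, so $\xi \mapsto A(x,\xi)$ is convex for a.e.\ $x$, and hence $\Phi$ is convex on $X$. By Lemma~\ref{diff}(i), $\Phi$ is $C^1$ and in particular strongly continuous; convex plus strongly continuous gives sequentially weakly lower semicontinuous. By Lemma~\ref{diff}(ii), $\Psi$ is sequentially weakly continuous, so $-\lambda \Psi$ is also sequentially weakly continuous. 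Summing, $J = \Phi - \lambda \Psi$ is sequentially weakly lower semicontinuous for every $\lambda \in \mathbb{R}$.

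For the $(S_+)$ property, suppose $u_n \rightharpoonup u$ in $X$ with $\limsup_{n\to\infty}\langle J'(u_n), u_n - u\rangle \le 0$. Write
\[
\langle \Phi'(u_n), u_n - u\rangle = \langle J'(u_n), u_n - u\rangle + \lambda \langle \Psi'(u_n), u_n - u\rangle.
\]
I would show the second term on the right tends to $0$. Splitting
\[
\langle \Psi'(u_n), u_n - u\rangle = \langle \Psi'(u_n) - \Psi'(u), u_n - u\rangle + \langle \Psi'(u), u_n - u\rangle,
\]
the first piece vanishes because Lemma~\ref{diff}(ii) gives $\Psi'(u_n) \to \Psi'(u)$ in $X^*$ while $\{u_n - u\}$ is bounded in $X$, and the second piece vanishes because $u_n - u \rightharpoonup 0$.

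Combining, $\limsup_{n\to\infty}\langle \Phi'(u_n), u_n - u\rangle \le 0$, i.e.,
\[
\limsup_{n\to\infty} \int_\Omega a(x, \nabla u_n)\cdot(\nabla u_n - \nabla u)\,dx \le 0
\]
by the formula in Lemma~\ref{diff}(i). Now Lemma~\ref{S+fora} applies and yields $u_n \to u$ strongly in $X$, as required. There is no substantial obstacle here: the whole argument is bookkeeping on top of Lemmas~\ref{S+fora} and~\ref{diff}; the only point worth checking carefully is the convexity of $A(x,\cdot)$ from $(A1)$ and $(A3)$, which is what makes $\Phi$ weakly lower semicontinuous.
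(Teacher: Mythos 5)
Your proof is correct and follows exactly the route the paper intends: the paper omits the argument, stating only that the lemma ``is easily obtained from Lemmas~\ref{S+fora} and \ref{diff}'', and your write-up supplies precisely those details (sequential weak lower semicontinuity of $\Phi$ via convexity of $A(x,\cdot)$ from $(A1)$ and $(A3)$ together with the sequential weak continuity of $\Psi$, and the $(S_+)$ property by isolating $\langle \Phi'(u_n),u_n-u\rangle$ and invoking Lemma~\ref{S+fora}). No gaps.
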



\begin{definition}
We say that $ u\in X $ is a weak solution of \eqref {1.1} if
\begin{equation*} \label{weak_sol_eq}
 \int_{\Omega}a(x,\nabla u(x))\cdot \nabla \varphi(x) dx
=\lambda\int_{\Omega}f(x,u(x))\varphi(x) dx
\end{equation*}
for all $\varphi\in X$.
\end{definition}
Obviously, under assumptions of Lemma~\ref{diff}, a critical point of $J$ is a weak solution of \eqref{1.1}. We now remind some variational principles for the multiplicity of solutions. The following is a three critical points type theorem due to Bonanno et al. \cite{Bonanno2}.
\begin{theorem} [{\cite[Theorem 2.1]{Bonanno2}}]\label{three}
Let $X$ be a reflexive real Banach space, and let $\Phi : X \to \mathbb{R}$ be a coercive, continuously G\^ateaux
differentiable and sequentially weakly lower semicontinuous functional whose G\^ateaux derivative admits a continuous inverse on $X^\ast$, $\Psi : X \to \mathbb{R}$ be a continuously G\^ateaux differentiable functional whose G\^ateaux derivative is compact such that $\Phi(0)= \Psi(0) = 0.$  Assume that there exist $r_0>0$ and $u_0\in X$ with $r_0< \Phi(u_0)$ such that
\begin{itemize}
\item[(i)] $\underset{\Phi(u)<r_0}{\operatorname{sup}}\Psi(u)<r_0\Psi(u_0)/\Phi(u_0);$
\item[(ii)] for each $\lambda\in \Lambda = (\Phi(u_0)/\Psi(u_0), r_0/\underset{\Phi(u)<r_0}{\operatorname{sup}}\Psi(u))$, the functional $\Phi-\lambda\Psi$ is coercive.
\end{itemize}
Then, for each $\lambda\in \Lambda$, the functional $\Phi-\lambda\Psi$ has at least three distinct critical points in X.
\end{theorem}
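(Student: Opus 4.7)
The plan is to follow a Bonanno-type three-critical-points scheme: produce two distinct local minima of $J_\lambda := \Phi - \lambda\Psi$ by comparison on the weakly open sublevel set $V := \Phi^{-1}((-\infty, r_0))$, then extract a third critical point by a mountain-pass argument.

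First I would verify the analytic machinery. For each $\lambda \in \Lambda$, $J_\lambda$ is $C^1$ and coercive (the latter from (ii)); it is sequentially weakly lower semicontinuous, by the assumed weak lower semicontinuity of $\Phi$ together with the sequential weak continuity of $\Psi$, which follows from the compactness of $\Psi'$. Moreover, $J_\lambda$ satisfies the Palais-Smale condition: if $\{u_n\}$ is a bounded $(PS)$-sequence with $u_n \rightharpoonup u$, then compactness of $\Psi'$ gives $\Psi'(u_n)\to\Psi'(u)$ strongly in $X^\ast$, so $\Phi'(u_n) = \lambda\Psi'(u_n) + o(1)$ converges strongly in $X^\ast$; since $(\Phi')^{-1}$ is continuous on $X^\ast$, this yields $u_n\to u$ strongly in $X$.

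The core step produces two distinct local minima. The sublevel set $\{u : \Phi(u) \leq r_0\}$ is weakly compact (bounded by coercivity, weakly closed by the weak lower semicontinuity of $\Phi$, in a reflexive ambient space), and the weakly lower semicontinuous $J_\lambda$ attains its infimum there at some $u_1$. The precise quantitative inequality in (i), together with the choice $\lambda < r_0/\sup_{V}\Psi$, is calibrated exactly so that an interior competitor, built from $u_0$ and the origin (exploiting $\Phi(0)=\Psi(0)=0$ and continuity of $\Phi$ and $\Psi$ along the segment $[0,u_0]$), has a $J_\lambda$-value strictly smaller than $J_\lambda$ at any point with $\Phi = r_0$. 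Hence $\Phi(u_1) < r_0$ strictly, so $u_1$ is a local minimum of $J_\lambda$ on $X$. Independently, coercivity and weak lower semicontinuity yield a global minimizer $u_2$; the inequality $\lambda > \Phi(u_0)/\Psi(u_0)$ gives $J_\lambda(u_0) < 0$, and combined with the lower bound on $J_\lambda|_V$ coming from (i) forces $\Phi(u_2) > r_0$, so that $u_1 \neq u_2$.

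Finally, with two distinct local minima $u_1, u_2$ of a $C^1$ functional satisfying Palais-Smale, the Pucci-Serrin or Ghoussoub-Preiss form of the mountain-pass theorem furnishes a third critical point $u_3$ at the mountain-pass level strictly above $\max\{J_\lambda(u_1), J_\lambda(u_2)\}$. I expect the main obstacle to be the quantitative step above: extracting from the strict scalar inequality in (i) the comparison that places the two local minima on opposite sides of the level $\{\Phi = r_0\}$. Constructing the right interior competitor in $V$, likely along $[0,u_0]$ just before the first $t$ with $\Phi(tu_0) = r_0$, and verifying that its value beats every boundary value is the most delicate part of the argument.
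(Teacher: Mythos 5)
This statement is not proved in the paper at all: it is imported verbatim as Theorem 2.1 of the cited work of Bonanno and Molica Bisci (itself a variant of the Bonanno--Marano three critical points theorem) and used as a black box in the proof of Theorem \ref{thm4.4}. So there is no in-paper argument to compare yours against, and your proposal has to stand on its own. Your scaffolding is the standard and correct one: two distinct local minima plus the Pucci--Serrin principle; coercivity and sequential weak lower semicontinuity of $\Phi-\lambda\Psi$ give a global minimizer; the $(PS)$ condition follows from compactness of $\Psi'$ together with the continuous inverse of $\Phi'$ exactly as you describe.

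The problem is that the entire content of the theorem sits in the ``quantitative step'' that you explicitly defer, and the two specific claims you sketch around it do not close as stated. First, to place the minimizer of $J_\lambda$ over the weakly compact set $\{\Phi\le r_0\}$ strictly inside $\{\Phi<r_0\}$, you compare the interior competitor ($0$, say, with $J_\lambda(0)=0$) against boundary points $u$ with $\Phi(u)=r_0$, where $J_\lambda(u)=r_0-\lambda\Psi(u)>0$ only if $\Psi(u)\le\sup_{\Phi<r_0}\Psi$; but hypothesis (i) controls $\Psi$ only on the \emph{open} sublevel set, and a point of the level set $\{\Phi=r_0\}$ need not be approximable from inside it, so continuity of $\Psi$ does not transfer the bound. (Bonanno's own route avoids this by running Ekeland's variational principle on $\Phi^{-1}((-\infty,r_0))$ --- the ``local minimum theorem'' --- rather than a boundary comparison; note also that the original hypothesis uses $\sup_{\Phi(u)\le r_0}\Psi(u)$.) Second, the separation ``$J_\lambda(u_0)<0$ plus the lower bound on $J_\lambda$ over $V$ forces $\Phi(u_2)>r_0$'' is not an inequality chain: for $u\in V$ one only gets $J_\lambda(u)>\Phi(u)-r_0$, and since $\lambda$ may be arbitrarily close to $\Phi(u_0)/\Psi(u_0)$, $J_\lambda(u_0)$ may be arbitrarily close to $0$, so it need not lie below $\inf_V J_\lambda$ without further information (e.g.\ $\inf_X\Phi=0$, which the quoted statement omits but the original theorem assumes). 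Until these two comparisons are actually carried out, what you have is an accurate roadmap of the known proof rather than a proof.
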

The last concern is the existence of infinitely many critical points when the energy functional $J$ is symmetric.  Recall that if $X$ is a separable reflexive Banach space then it is well-known that there exist $\{e_n\}_{n=1}^{\infty} \subset X$  and $\{f_n\}_{n=1}^{\infty} \subset X^{\ast}$ such that \ \ $X = \overline{\mbox{span}\{e_n\}_{n=1}^{\infty}}, \quad
X^\ast = \overline{\mbox{span}\{f_n\}_{n=1}^{\infty}}$\ \ and
$$\langle f_i, e_j \rangle =
\begin{cases}
1, & \text{ if } i = j, \\
0, & \text{ if } i \ne j,
\end{cases}$$
where $\langle \cdot,\cdot\rangle$ is the duality product between
$X^\ast$ and $X$ (see \cite[Section 17]{Zhao}) . Denote
$$X_n = \mbox{span}\{e_n\},~~~~~~~~~~Y_n = \oplus_{k=1}^n X_k,~~~~~~~~~~Z_n = \overline{\oplus_{k=n}^\infty X_k}.$$
For $X_n, Y_n, Z_n$ taken as the above, we have

\begin{theorem}[{\cite[Fountain Theorem]{Willem}}] \label{Fountain_thm}
Assume that $J \in C^1(X,\mathbb{R})$ is even and for each $n=1,2,\cdots, $ there exist $\rho_n >\gamma_n>0$ such that
\begin{itemize}
\item[$(H1)$] $b_n=\inf_{\{u \in Z_n:~ \|u\|=\gamma_n\}}J(u) \to +\infty$ as $n \to \infty;$
\item[$(H2)$] $a_n=\max_{\{u \in Y_n:~ \|u\|=\rho_n\}}J(u) \leq 0$;
\item[$(H3)$] $J$ satisfies the $(PS)_c$ condition for every $c>0$.
\end{itemize}
Then $J$ has a sequence of critical values tending to $+\infty$.
\end{theorem}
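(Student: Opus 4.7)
The plan is to prove the theorem by constructing a sequence of minimax values $c_k \to +\infty$ that are critical values of $J$, via an odd-equivariant minimax and the deformation lemma. For each integer $k \geq 2$, set $B_k := \{u \in Y_k : \|u\| \leq \rho_k\}$ and
\[
\Gamma_k := \bigl\{h \in C(B_k, X) : h \text{ is odd and } h|_{\partial B_k} = \mathrm{id}\bigr\}.
\]
Define
\[
c_k := \inf_{h \in \Gamma_k}\, \max_{u \in B_k} J(h(u)).
\]
Since $\mathrm{id} \in \Gamma_k$, $c_k$ is finite. The aim is to show $c_k \geq b_k$ (so $c_k \to +\infty$ by $(H1)$) and that each $c_k$ is a critical value of $J$.

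The key step is the linking property: for every $h \in \Gamma_k$, the image $h(B_k)$ meets $\{u \in Z_k : \|u\| = \gamma_k\}$. Using that the Schauder basis gives the topological decomposition $X = Y_{k-1} \oplus Z_k$, let $P : X \to Y_{k-1}$ be the continuous projection along $Z_k$ and define the odd map
\[
F : B_k \to Y_{k-1} \oplus \mathbb{R} \cong \mathbb{R}^k, \qquad F(u) := \bigl(Ph(u),\, \|h(u)\| - \gamma_k\bigr).
\]
On $\partial B_k$ we have $h = \mathrm{id}$, so the second coordinate equals $\rho_k - \gamma_k > 0$ and $F$ does not vanish there. By Borsuk's theorem, $\deg(F, \mathrm{int}(B_k), 0)$ is odd and hence nonzero, yielding $u_0 \in \mathrm{int}(B_k)$ with $Ph(u_0) = 0$ and $\|h(u_0)\| = \gamma_k$; thus $h(u_0) \in Z_k$ with $\|h(u_0)\| = \gamma_k$, so $(H1)$ gives $J(h(u_0)) \geq b_k$. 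Taking the infimum over $\Gamma_k$ yields $c_k \geq b_k$.

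Next I would show each $c_k$ is a critical value by contradiction. If $c_k$ were a regular value, the $(PS)_{c_k}$ condition from $(H3)$, combined with the equivariant deformation lemma (built from an odd locally Lipschitz pseudo-gradient vector field for the even functional $J$), produces $\varepsilon > 0$ and an odd continuous $\eta : X \to X$ with $\eta(J^{c_k + \varepsilon}) \subset J^{c_k - \varepsilon}$ and $\eta = \mathrm{id}$ on $\{J \leq 0\}$. Choose $h \in \Gamma_k$ with $\max_{B_k} J \circ h < c_k + \varepsilon$. By $(H2)$, $J \leq 0$ on $\partial B_k$, so $\eta \circ h$ remains the identity on $\partial B_k$ and remains odd; thus $\eta \circ h \in \Gamma_k$. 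But then $\max_{B_k} J \circ (\eta \circ h) < c_k - \varepsilon$, contradicting the definition of $c_k$.

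The main obstacle is the equivariant deformation: the pseudo-gradient vector field must simultaneously be odd (to preserve the $\mathbb{Z}_2$-symmetry of $\Gamma_k$), be locally Lipschitz, and be cut off so its flow is the identity on $\{J \leq 0\}$ (so that $(H2)$ guarantees $\eta \circ h \in \Gamma_k$). This is a standard partition-of-unity construction carried out on the quotient $X/\{\pm\mathrm{id}\}$, but it is the one place where the symmetry, the $(PS)$ compactness, and the geometric information from $(H2)$ have to be woven together. Once it is available, the minimax setup, the Borsuk-degree linking, and passing to the limit $c_k \to +\infty$ are formal.
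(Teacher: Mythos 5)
The paper does not actually prove this statement; it is quoted from Willem's book and used as a black box, so the only benchmark is the standard proof there. Your overall architecture --- the minimax over the class $\Gamma_k$ of odd maps fixing $\partial B_k$, a linking lemma forcing $h(B_k)$ to meet the sphere of radius $\gamma_k$ in $Z_k$, and an odd deformation showing each $c_k\ge b_k$ is critical --- is exactly that standard proof.

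However, your linking step contains a genuine error. The map $F(u)=\bigl(Ph(u),\|h(u)\|-\gamma_k\bigr)$ is \emph{not} odd: its second coordinate is even in $u$, since $\|h(-u)\|=\|-h(u)\|=\|h(u)\|$, so $F(-u)=(-Ph(u),\|h(u)\|-\gamma_k)\neq -F(u)$ in general; nor is $F$ odd on $\partial B_k$, where the second coordinate is the positive constant $\rho_k-\gamma_k$. Borsuk's theorem therefore does not apply, and its conclusion actually fails here: for $k=1$ and $h=\mathrm{id}$ one has $F(u)=\|u\|-\gamma_1$ on $[-\rho_1,\rho_1]$, whose zeros $\pm\gamma_1$ carry opposite local degrees, so $\deg(F,\mathrm{int}(B_1),0)=0$. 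The intersection property is still true, but it must be proved differently (this is Willem's argument): set $U:=\{u\in\mathrm{int}(B_k):\|h(u)\|<\gamma_k\}$, observe that $U$ is a bounded open symmetric neighborhood of $0$ in $Y_k\cong\mathbb{R}^k$ (here $h(0)=0$ by oddness, and $\|h(u)\|=\|u\|=\rho_k>\gamma_k$ on $\partial B_k$ keeps $\overline{U}$ inside $\mathrm{int}(B_k)$, so $\|h\|=\gamma_k$ on $\partial U$), and apply the Borsuk--Ulam theorem to the odd continuous map $Ph:\partial U\to Y_{k-1}\cong\mathbb{R}^{k-1}$, which must vanish at some $u_0\in\partial U$; there $h(u_0)\in Z_k$ with $\|h(u_0)\|=\gamma_k$. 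The remainder of your sketch (equivariant deformation plus the contradiction with the definition of $c_k$) is the standard route; the only point left implicit is that the cut-off $\eta=\mathrm{id}$ on $\{J\le 0\}$ requires $c_k-2\varepsilon>0$, which is harmless since $c_k\ge b_k\to+\infty$ and one only needs large $k$.
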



\section{Existence and multiplicity of Solutions}
We divide this section to show the existence and multiplicity of solutions for \eqref{1.1} into two cases; $(p(\cdot)-1)$-superlinear and $(p(\cdot)-1)$-sublinear at infinity using critical point theories in Calculus
of Variations and the three critical points type theorem~\ref{three}.

\subsection{$(p(\cdot)-1)$-superlinear at infinity}

In this subsection, we shall establish the existence and multiplicity of solutions when $f$ satisfies the $(AR)$ condition. We assume that
\begin{itemize}
\item[($F1$)]   There exists a constant $C>0$ such that
$| f(x,t)| \leq C\left(1+| t| ^{q(x) -1}\right) $ for a.e. $x\in \Omega $
and all $t\in\mathbb{R}$, where $q\in C_{+}(\overline{\Omega }) $ with
$q(x)<p_{s}^{\ast}(x)$ for all $x\in \overline{\Omega }$;
\item[($F2$)] There exist $l>0$ and $\theta >p^{+}$  such that
$$0<\theta F(x,t) \leq f(x,t) t$$ for a.e. $x\in \Omega$ and all $|t| \geq l,$ where $F(x,t)=\int_0^{t}f(x,s)ds$;
\item[($F3$)] $\lim_{t\to 0}\frac{f(x,t)}{|t|^{p^{+}-1}}=0$ uniformly for
$x\in \Omega.$
\end{itemize}
Notice that $(F1)$ implies that
\begin{itemize}
\item[$(G1)$] $|F(x,t)|\leq C\left[|t|+\frac{1}{q(x)}|t|^{q(x)}\right]\leq  C_1\left[1+|t|^{q(x)}\right]$ for a.e. $x\in \Omega $ and all $t\in\mathbb{R}$.
\end{itemize}
Under the condition $(F1),$ the (AR) condition $(F2)$ implies that there exists a function $\kappa \in L^{\infty}(\Omega)\cap P_+(\Omega)$ such that
\begin{equation}\label{4.estF}
F(x,t)\geq \kappa(x)|t|^{\theta},\quad \text{for \ a.e.}\ x\in \Omega\ \ \text{and all}\ \ |t|>l \ \ .
\end{equation}
From this, one can deduce that $f$ is $(p(\cdot)-1)$-superlinear at infinity. The first existence result is obtained by the classical  Mountain Pass Theorem.
\begin{theorem} \label{thm4.1}
Assume that $(w1), (A1)-(A5)$ hold. Assume also that $(F1)-(F3)$ hold in which $p^+ < q^-$. Then  \eqref{1.1} has at least one nontrivial weak solution in $X$ for every $\lambda>0$.
\end{theorem}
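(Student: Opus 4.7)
The plan is to apply the classical Mountain Pass Theorem to $J=\Phi-\lambda\Psi\in C^1(X,\mathbb{R})$ (Lemma~\ref{diff}), whose critical points are precisely the weak solutions of \eqref{1.1}. Since $A(x,0)=F(x,0)=0$ gives $J(0)=0$, it suffices to verify (i) a local positive lower bound for $J$ on a small sphere, (ii) the existence of $e\in X$ with $\|e\|$ large and $J(e)\leq 0$, and (iii) the Palais--Smale condition $(PS)_c$ at every level $c$.

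For (i), $(F3)$ and $(F1)$ give, for each $\varepsilon>0$, a constant $C_\varepsilon>0$ with
\[
|F(x,t)|\leq \varepsilon|t|^{p^+}+C_\varepsilon|t|^{q(x)}, \qquad \text{a.e. } x\in\Omega,\ t\in\mathbb{R}.
\]
Since $p^+<q^-\leq q(x)<p_s^\ast(x)$, Proposition~\ref{prop6} produces continuous embeddings $X\hookrightarrow L^{p^+}(\Omega)$ and $X\hookrightarrow L^{q(x)}(\Omega)$, and combining with $(\widetilde{A}4)$ and Proposition~\ref{prop2}, for $\|u\|\leq 1$ one obtains
\[
J(u)\geq\tfrac{\bar C}{p^+}\|u\|^{p^+}-\lambda\varepsilon C_1\|u\|^{p^+}-\lambda C_\varepsilon C_2\|u\|^{q^-}.
\]
Choosing $\varepsilon$ small and invoking $q^->p^+$ then yields $\rho,\alpha>0$ with $J\geq\alpha$ on $\{\|u\|=\rho\}$. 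For (ii), pick $u_0\in C_0^\infty(\Omega)\setminus\{0\}$: $(\widetilde{A}2)$ bounds $\Phi(tu_0)\leq\widetilde C_1\|\widetilde k\|_{L^1}+\widetilde C_1 t^{p^+}\!\int_\Omega w|\nabla u_0|^{p(x)}dx$ for $t\geq 1$, while \eqref{4.estF} (a standard consequence of (AR)) gives $\Psi(tu_0)\geq ct^\theta-O(1)$ with $c=\int_\Omega\kappa|u_0|^\theta\,dx>0$. Since $\theta>p^+$, $J(tu_0)\to-\infty$, so $e:=t_0 u_0$ works for $t_0$ sufficiently large.

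For (iii), let $\{u_n\}\subset X$ satisfy $J(u_n)\to c$ and $J'(u_n)\to 0$ in $X^\ast$. Using $(A4)$, $(A5)$ pointwise and $p^+<\theta$,
\[
A(x,\nabla u_n)-\tfrac{1}{\theta}a(x,\nabla u_n)\!\cdot\!\nabla u_n\geq\Bigl(1-\tfrac{p^+}{\theta}\Bigr)\tfrac{\bar C}{p^+}\,w(x)|\nabla u_n|^{p(x)},
\]
while (AR) combined with $(F1)$ shows $F(x,u_n)-\theta^{-1}f(x,u_n)u_n$ is bounded above by an $L^1$ function. Hence
\[
c+1+o(\|u_n\|)\geq J(u_n)-\tfrac{1}{\theta}\langle J'(u_n),u_n\rangle\geq\Bigl(1-\tfrac{p^+}{\theta}\Bigr)\tfrac{\bar C}{p^+}\!\int_\Omega\!w|\nabla u_n|^{p(x)}\,dx-C_4,
\]
and Proposition~\ref{prop2} converts this modular bound into $\sup_n\|u_n\|<\infty$. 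Extracting $u_n\rightharpoonup u$, Lemma~\ref{diff}(ii) gives $\langle\Psi'(u_n),u_n-u\rangle\to 0$, so $\langle\Phi'(u_n),u_n-u\rangle=\int_\Omega a(x,\nabla u_n)\!\cdot\!(\nabla u_n-\nabla u)\,dx\to 0$, and Lemma~\ref{S+fora} (the $(S_+)$ property) promotes weak to strong convergence $u_n\to u$, closing $(PS)_c$.

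The principal obstacle is the boundedness of $(PS)_c$ sequences in this degenerate variable-exponent setting: one must carefully chain $(A5)$ (linking $A$ to $a\cdot\xi$), $(A4)$ (linking $a\cdot\xi$ to the $w$-weighted modular), the (AR) condition, and finally Proposition~\ref{prop2} (to translate modular control into norm control). Once boundedness is secured, the remainder is the standard Mountain Pass execution, with Lemma~\ref{S+fora} supplying the strong convergence that is not automatic in this weighted nonuniformly convex framework.
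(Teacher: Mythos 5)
Your proposal is correct and follows essentially the same route as the paper: the paper's proof of Theorem~\ref{thm4.1} consists precisely of invoking the Mountain Pass Theorem via Lemma~\ref{lePS} (the $(PS)$ condition, obtained from $(A5)$, $(\widetilde{A}4)$, the (AR) condition and Proposition~\ref{prop2}, followed by the $(S_+)$ property of Lemma~\ref{S+fora}) and Lemma~\ref{leGeo} (the geometry, obtained from $(F3)$, $(F1)$ with $p^+<q^-$, and \eqref{4.estF}), whose proofs the authors omit by reference to \cite{Ky1}. You have simply written out the details of those two lemmas, and your execution matches the intended arguments.
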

If we assume in addition the oddivity on $a$ and $f,$ we can obtain the following type multiplicity of solutions.

\begin{theorem} \label{thm4.2}
Assume that $(w1), (A0)-(A5)$ hold. Assume also that $(F1), (F2)$ hold. If $f(x,-t)=-f(x,t)$ for a.e. $x \in \Omega$ and for all $t \in \mathbb R,$ then $J$ has a sequence of critical points $\{\pm u_n\}$ such that $J(\pm u_n) \to +\infty$ as $n \to \infty$ and \eqref{1.1} has infinitely many pairs of solutions for every $\lambda>0$.
\end{theorem}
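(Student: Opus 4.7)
The plan is to apply the Fountain Theorem to $J = \Phi - \lambda\Psi$ on the decomposition $\{X_n, Y_n, Z_n\}$ described before the theorem. By $(A0)$ and the formula $A(x,\xi) = \int_0^1 a(x,t\xi)\cdot \xi\,dt$, $A(x,\cdot)$ is even, hence $\Phi$ is even; oddness of $f$ makes $F(x,\cdot)$ even, hence $\Psi$ is even. So $J$ is even, and $J\in C^1(X,\mathbb{R})$ by Lemma~\ref{diff}. It remains to verify $(H1)$, $(H2)$, and $(H3)$.

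For $(H3)$, let $\{u_m\}$ be a $(PS)_c$ sequence. The standard (AR) computation, combining $(A4)$, $(A5)$, and $(F2)$, yields
\[
c+1+\|u_m\| \;\geq\; J(u_m) - \tfrac{1}{\theta}\langle J'(u_m),u_m\rangle \;\geq\; \Bigl(\tfrac{1}{p^+}-\tfrac{1}{\theta}\Bigr)\bar C \int_\Omega w(x)|\nabla u_m|^{p(x)}\,dx - C,
\]
so Proposition~\ref{prop2} bounds $\{u_m\}$ in $X$ (since $p^->1$). Passing to a subsequence with $u_m\rightharpoonup u$, the fact that $\langle J'(u_m),u_m-u\rangle\to 0$ and the $(S_+)$-property from Lemma~\ref{S+} give $u_m\to u$ strongly.

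For $(H2)$, integrating $(F2)$ starting at $|t|=l$ and controlling $F$ on $|t|\le l$ via $(F1)$ yields $F(x,t)\ge \kappa(x)|t|^\theta - M(x)$ for all $t\in\mathbb R$, with $\kappa\in P_+(\Omega)\cap L^\infty(\Omega)$ and $M\in L^1(\Omega)$. On the finite-dimensional subspace $Y_n$ the map $u\mapsto\bigl(\int_\Omega \kappa|u|^\theta\bigr)^{1/\theta}$ is a norm equivalent to $\|\cdot\|$, so $\Psi(u)\ge c_n\|u\|^\theta - C$, while $(\widetilde A 2)$ and Proposition~\ref{prop2} give $\Phi(u)\le C(1+\|u\|^{p^+})$. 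Since $\theta>p^+$, $J(u)\to-\infty$ as $\|u\|\to\infty$ in $Y_n$, and any sufficiently large $\rho_n$ ensures $a_n\le 0$.

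The main obstacle is $(H1)$, which requires a sharp decay estimate for embedding constants on the tail subspaces. Set $\beta_n:=\sup_{u\in Z_n,\ \|u\|=1}|u|_{L^{q(x)}(\Omega)}$; using the compact embedding $X\hookrightarrow L^{q(x)}(\Omega)$ from Proposition~\ref{prop6} together with the biorthogonality of $\{e_n\},\{f_n\}$, a standard Willem-type argument forces $\beta_n\to 0$. For $u\in Z_n$ with $\|u\|=\gamma_n\ge 1$, $(\widetilde A 4)$ and Proposition~\ref{prop2} yield $\Phi(u)\ge \tfrac{\bar C}{p^+}\|u\|^{p^-}$, while $(G1)$, Proposition~\ref{prop2}, and $|u|_{L^{q(x)}(\Omega)}\le\beta_n\|u\|$ give $\lambda\Psi(u)\le \lambda C(\beta_n^{q^+}\|u\|^{q^+}+1)$. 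Comparing $(F1)$ to the AR lower bound $F\ge\kappa|t|^\theta$ forces $q^-\ge\theta>p^+$, so $p^--q^+<0$. Choosing $\gamma_n=\bigl(2\lambda C p^+\beta_n^{q^+}/\bar C\bigr)^{1/(p^--q^+)}$ makes $\gamma_n\to\infty$ and $b_n\ge\tfrac{\bar C}{2p^+}\gamma_n^{p^-}-C\to\infty$. The Fountain Theorem then yields a sequence $\{u_n\}$ of critical points of $J$ with $J(u_n)\to+\infty$, and evenness of $J$ produces the pairs $\pm u_n$, each pair corresponding to a pair of solutions of \eqref{1.1}.
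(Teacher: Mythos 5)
Your proposal is correct and follows essentially the same route as the paper: evenness of $J$ from $(A0)$, \eqref{formA} and the oddness of $f$; the Fountain Theorem on the same decomposition; $(H3)$ via the (AR)/$(S_+)$ argument; $(H2)$ via the lower bound $F(x,t)\geq\kappa(x)|t|^{\theta}-C$ and norm equivalence on the finite-dimensional $Y_n$; and $(H1)$ via the decay of $\beta_n=\sup\{|u|_{L^{q(x)}(\Omega)}:u\in Z_n,\ \|u\|=1\}$. The only (harmless) divergence is in $(H1)$: you first deduce $q^+\geq q^-\geq\theta>p^+\geq p^-$ from $(F1)$--$(F2)$ and then use the classical choice $\gamma_n=\bigl(2\lambda Cp^+\beta_n^{q^+}/\bar C\bigr)^{1/(p^--q^+)}$, whereas the paper takes $\gamma_n=\beta_n^{-1/2}$ and bounds $\|u\|^{q^+}\leq\|u\|^{q^++p^-}$ so as to avoid comparing $p^-$ with $q^+$ at all; both computations close correctly.
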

The proofs of Theorems~\ref{thm4.1} and \ref{thm4.2} will be done thanks to the next two lemmas which verify the $(PS)$ condition and the mountain pass geometries of $J$, respectively. Since the proofs of these two lemmas are similar to that of Theorem 3.3 in \cite{Ky1}, we omit it.

\begin{lemma} \label{lePS}
Assume that $(w1), (A1)-(A5)$ hold. Assume also that $(F1), (F2)$ hold. Then $J$ satisfies the $(PS)$ condition for every $\lambda>0$.
\end{lemma}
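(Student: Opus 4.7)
My plan is to take a Palais--Smale sequence $\{u_n\}\subset X$ with $J(u_n)\to c$ and $J'(u_n)\to 0$ in $X^\ast$, establish its boundedness in $X$ via the Ambrosetti--Rabinowitz type argument, and then upgrade weak convergence to strong convergence through the $(S_+)$ property from Lemma~\ref{S+}.

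For boundedness, I will compute $\theta J(u_n)-\langle J'(u_n),u_n\rangle$ and split the contribution into an operator part and a nonlinearity part. Using $(A5)$ together with $(\widetilde{A}4)$, the operator part dominates
\[
\int_{\Omega}\bigl[\theta A(x,\nabla u_n)-a(x,\nabla u_n)\cdot\nabla u_n\bigr]\,dx
\geq (\theta-p^+)\int_{\Omega}A(x,\nabla u_n)\,dx
\geq \frac{(\theta-p^+)\bar{C}}{p^+}\int_{\Omega}w(x)|\nabla u_n|^{p(x)}\,dx.
\]
For the nonlinearity, splitting $\Omega$ into $\{|u_n|\geq l\}$ and $\{|u_n|<l\}$, assumption $(F2)$ makes the integrand $\theta F(x,u_n)-f(x,u_n)u_n$ nonpositive on the first set, while $(F1)$ bounds it uniformly on the second set. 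The left-hand side of the identity is bounded by $\theta|c|+1+\|u_n\|$ for large $n$ since $\|J'(u_n)\|_{X^\ast}\to 0$. Combining and applying Proposition~\ref{prop2} to pass from the modular $\int_\Omega w|\nabla u_n|^{p(x)}\,dx$ to the norm, I obtain an inequality of the form $\|u_n\|^{p^-}\leq C_1+C_2\|u_n\|$ (assuming $\|u_n\|>1$, which we may after enlarging the constant). Since $p^->1$, this gives the required boundedness.

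With $\{u_n\}$ bounded in the reflexive space $X$ (Proposition~\ref{prop5}), I extract a subsequence $u_n\rightharpoonup u$ in $X$. Since $\{u_n-u\}$ is norm-bounded and $\|J'(u_n)\|_{X^\ast}\to 0$, we have
\[
|\langle J'(u_n),u_n-u\rangle|\leq \|J'(u_n)\|_{X^\ast}\|u_n-u\|\to 0,
\]
so in particular $\limsup_{n\to\infty}\langle J'(u_n),u_n-u\rangle\leq 0$. Applying the $(S_+)$ property of $J'$ from Lemma~\ref{S+} yields $u_n\to u$ strongly in $X$, which is the $(PS)$ condition.

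The main obstacle is the boundedness step: the interplay between $(A5)$ and $(\widetilde{A}4)$ must be arranged carefully so that the coercive lower bound on the modular survives, and the constant $l$ from $(F2)$ together with $(F1)$ must be used to absorb the contribution of $u_n$ on the set $\{|u_n|<l\}$ by a bound independent of $n$. Once boundedness is secured, the strong convergence step is essentially free thanks to Lemma~\ref{S+}, which packages the technical Leray--Lions type monotonicity argument inherited from Lemma~\ref{S+fora}.
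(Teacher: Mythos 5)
Your proof is correct and follows the standard route that the paper itself invokes (it omits the proof, citing the analogous Theorem 3.3 of \cite{Ky1}): the Ambrosetti--Rabinowitz computation $\theta J(u_n)-\langle J'(u_n),u_n\rangle$ with $(A5)$ and $(\widetilde{A}4)$ for the operator part, $(F1)$--$(F2)$ with the splitting at $|u_n|=l$ for the nonlinearity, Proposition~\ref{prop2} to convert the modular bound into $\|u_n\|^{p^-}\leq C_1+C_2\|u_n\|$, and then the $(S_+)$ property of Lemma~\ref{S+} to upgrade weak to strong convergence. All the hypotheses you use are available ($(F1)$ implies $(\widetilde{F}1)$, so Lemma~\ref{S+} applies), and no step is missing.
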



\begin{lemma} \label{leGeo}
Assume that $(w1), (\widetilde{A}2)$ and $(\widetilde{A}4)$ hold. Assume also that $(F1)-(F3)$ hold in which $p^+ < q^-$. Then for every $\lambda>0$
\begin{itemize}
\item [(i)] there exist $r, \rho>0$ such that $J(u)\geq \rho$ if $\|u\|=r$;
\item [(ii)] there exists $e\in X$ with $\|e\|>r$ such that $J(e)<0$.
\end{itemize}
\end{lemma}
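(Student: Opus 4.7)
\medskip

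\noindent\textbf{Plan for Lemma~\ref{leGeo}.} The two items are the standard Mountain Pass geometry, so the plan is to combine the modular/norm estimates of Proposition~\ref{prop2}, the compact embeddings of Proposition~\ref{prop6}, and the structural bounds $(\widetilde{A}2)$, $(\widetilde{A}4)$ on $A$ with the growth control $(F1)$, $(F3)$ and the super-$p^+$ estimate \eqref{4.estF} on $F$.

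\medskip

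\noindent\emph{Part (i).} First I would use $(\widetilde{A}4)$ to bound
\[
\Phi(u)\ \geq\ \frac{\bar C}{p^+}\int_\Omega w(x)|\nabla u|^{p(x)}\,dx,
\]
and then invoke Proposition~\ref{prop2} to get $\Phi(u)\geq \frac{\bar C}{p^+}\|u\|^{p^+}$ whenever $\|u\|\leq 1$. For the nonlinear side, combining $(F3)$ with $(F1)$ yields, for any $\varepsilon>0$, a constant $C_\varepsilon>0$ such that
\[
|F(x,t)|\ \leq\ \varepsilon|t|^{p^+}+C_\varepsilon|t|^{q(x)}\qquad\text{for a.e.\ }x\in\Omega,\ t\in\mathbb{R}.
\]
Since $p^+<q^-\leq q(x)<p_s^{\ast}(x)$, Proposition~\ref{prop6} gives continuous embeddings $X\hookrightarrow L^{p^+}(\Omega)$ and $X\hookrightarrow L^{q(x)}(\Omega)$, so by Proposition~\ref{prop2} applied in $L^{q(x)}(\Omega)$,
\[
\Psi(u)\ \leq\ \varepsilon C\|u\|^{p^+}+C_\varepsilon C'\|u\|^{q^-}\qquad\text{for }\|u\|\leq 1.
\]
Choosing $\varepsilon$ so that $\frac{\bar C}{p^+}-\lambda \varepsilon C>0$, the inequality
\[
J(u)\ \geq\ \left(\frac{\bar C}{p^+}-\lambda\varepsilon C\right)\|u\|^{p^+}-\lambda C_\varepsilon C'\|u\|^{q^-}
\]
gives a positive minorant $\rho$ on a sphere $\|u\|=r$ for $r>0$ small, since $q^->p^+$.

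\medskip

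\noindent\emph{Part (ii).} I would fix any nontrivial $v\in C_0^\infty(\Omega)\subset X$ (so in particular $v\in L^\infty(\Omega)$) and study $t\mapsto J(tv)$ for $t\geq 1$. On one hand, $(\widetilde{A}2)$ gives
\[
\Phi(tv)\ \leq\ \widetilde{C}_1\|\widetilde k\|_{L^1(\Omega)}+\widetilde{C}_1 t^{p^+}\int_\Omega w(x)|\nabla v|^{p(x)}\,dx,
\]
using $t^{p(x)}\leq t^{p^+}$ for $t\geq 1$. On the other hand, \eqref{4.estF} yields $F(x,s)\geq \kappa(x)|s|^\theta$ for $|s|>l$; together with the bound $|F(x,s)|\leq C_1(1+l^{q^+})$ coming from $(G1)$ for $|s|\leq l$, a case distinction on the sets $\{|tv|>l\}$ and $\{|tv|\leq l\}$ produces a global lower bound of the form
\[
F(x,tv(x))\ \geq\ \kappa(x)|tv(x)|^\theta-\kappa(x)l^\theta-M_0,
\]
hence
\[
\Psi(tv)\ \geq\ t^\theta\int_\Omega \kappa(x)|v(x)|^\theta\,dx-C_2.
\]
Here the integral is strictly positive and finite because $v\in L^\infty(\Omega)$, $v\not\equiv 0$, and $\kappa\in L^\infty(\Omega)\cap P_+(\Omega)$. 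Since $\theta>p^+$, we obtain $J(tv)\to-\infty$ as $t\to+\infty$, so choosing $t$ sufficiently large and setting $e:=tv$ gives the desired point.

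\medskip

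\noindent\emph{Main obstacle.} The only subtle point is handling the ``low part'' $\{|tv|\leq l\}$ in (ii) when proving the lower bound on $\Psi(tv)$: one must not lose the super-$p^+$ growth obtained off this set. The standard workaround, sketched above, is to add and subtract $\kappa(x)l^\theta$ so that the inequality $F\geq \kappa|tv|^\theta-\kappa l^\theta-M_0$ holds globally. Beyond this, everything reduces to the modular/norm inequalities of Proposition~\ref{prop2} and the embedding of Proposition~\ref{prop6}.
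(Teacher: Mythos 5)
Your proof is correct and follows essentially the standard Mountain Pass geometry argument that the paper itself omits (it refers the reader to the proof of Theorem 3.3 in \cite{Ky1}): $(\widetilde{A}4)$ plus Proposition~\ref{prop2} for the coercive minorant near the origin, the $(F1)$--$(F3)$ splitting $|F(x,t)|\leq\varepsilon|t|^{p^+}+C_\varepsilon|t|^{q(x)}$ with the embeddings of Proposition~\ref{prop6} for part (i), and $(\widetilde{A}2)$ together with the global lower bound derived from \eqref{4.estF} and $(G1)$ for part (ii). The only cosmetic imprecision is in part (i), where the modular inequality $\int_\Omega|u|^{q(x)}dx\leq|u|_{L^{q(x)}(\Omega)}^{q^-}$ requires $|u|_{L^{q(x)}(\Omega)}<1$ rather than merely $\|u\|\leq 1$, but this is absorbed by taking $r$ sufficiently small, as you do.
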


We now give proofs of Theorems~\ref{thm4.1} and \ref{thm4.2}.

\begin{proof}[\textbf{Proof of Theorem~\ref{thm4.1}}]

The fact $J(0)=0$ and Lemmas \ref{lePS} and \ref{leGeo} show that $J$ satisfies all required conditions of the Mountain Pass Theorem and this completes the proof.

\end{proof}

\begin{proof}[\textbf{Proof of Theorem~\ref{thm4.2}}]
 We showed that $J\in C^1(X,\mathbb R)$ by Lemma~\ref{diff} and it is obvious to see that $J$ is an  even functional due to the oddivity of $f,$   \eqref{formA} and $(A0)$. Next, we shall verify that $J$ satisfies conditions $(H1)-(H3)$ in Fountain Theorem. $(H3)$ is clearly satisfied by Lemma~ \ref{lePS}.

 To show $(H1)$, we let
  $$\beta_n=\sup \{|u|_{L^{q(x)}(\Omega)}: u\in Z_n,\ \|u\|=1\}$$
  for each $n\in \mathbb{N}.$ By Proposition~\ref{prop6}, it is clear that $\Psi(u)=|u|_{L^{q(x)}(\Omega)}$ is sequentially weakly continuous on $X$. Thus, $\beta_n\to 0$ as $n\to \infty$ in view of \cite[Lemma 3.3]{Fan3}. This implies that there exists a positive integer $n_0$ such that $\beta_n\leq 1$ for all $n\geq n_0$. For each $n\in \mathbb{N},$ define $\gamma_n$ by
$$\gamma_n =
\begin{cases}
\frac{1}{\sqrt{\beta_n}}, & \text{ if } n_0\leq n, \\
1, & \text{ if } 1\leq n <n_0.
\end{cases}$$
Obviously, $\gamma_n\geq 1$ for all $n$ and $\gamma_n \to \infty$ as $n\to \infty.$ Then for $u\in Z_n$ with $\|u\|=\gamma_n$, by $(\widetilde{A}4),(G1)$, Proposition~\ref{prop2} and the definition of $\beta_n$, we have
\begin{align*}
J( u) &\geq \frac{\bar{C}}{p^+}\int_{\Omega} w(x)| \nabla u| ^{p(x)}dx-\lambda\int_{\Omega} F( x,u) dx\\
&\geq \frac{\bar{C}}{p^{+}}\|u\|^{p^{-}}-\lambda C_1\int_{\Omega} |u|^{q(x)}dx-\lambda C_1|\Omega|\\
&\geq \frac{\bar{C}}{p^{+}}\|u\|^{p^{-}}-\lambda C_1\left(1+|u|^{q^+}_{L^{q(x)}(\Omega)}\right)-\lambda C_1|\Omega|\\
&\geq \frac{\bar{C}}{p^{+}}\|u\|^{p^{-}}-\lambda C_1\beta_n^{q^+}\|u\|^{q^+}-\lambda (1+|\Omega|)C_1\\
&\geq \frac{\bar{C}}{p^{+}}\|u\|^{p^{-}}-\lambda C_1\beta_n^{q^+}\|u\|^{q^++p^-}-\lambda (1+|\Omega|)C_1.
\end{align*}
Thus, for $n\geq n_0$ we have
\begin{equation*}\label{estJ}
b_n=\inf_{\{u \in Z_n:~ \|u\|=\gamma_n\}}J(u)\geq \beta_n^{-\frac{p^-}{2}}\left(\frac{\bar{C}}{p^{+}}-\lambda C_1\beta_n^{\frac{q^+}{2}}\right)-\lambda(1+|\Omega|)C_1.
\end{equation*}
This completes $(H1).$

 Finally, we shall verify $(H2)$.
By $(G1)$ and \eqref{4.estF}, there exists a constant $C_{2}$ such that
\[
F(x,t)\geq \kappa(x)|t| ^{\theta }-C_{2},\quad
\text{for a.e.}\ \ x\in \Omega \ \ \text {and all}\ \ t\in \mathbb{R}.
\]
Using this and $(\widetilde{A}2)$, for $u\in Y_{n}$ with \ $\| u\|>1$, we have 
\begin{align}
J ( u)
&\leq \widetilde{C}_1 \int_{\Omega}\left(\widetilde{k}(x)+w(x)|\nabla u|^{p(x)}\right)dx-\lambda\int_{\Omega}\kappa(x)|u|^{\theta}dx+\lambda C_2|\Omega|\notag\\
&\leq \widetilde{C}_1 \|u\|^{p^+}-\lambda|u|^\theta_{L^\theta(\kappa,\Omega)}+\widetilde{C}_1|\widetilde{k}|_{L^1(\Omega)}+\lambda C_2|\Omega|.\label{estJ1}
\end{align}
Since on the finite dimensional space $Y_{n}$, norms $\|\cdot\|$ and $|\cdot|_{L^{\theta}(\kappa,\Omega)} $ are equivalent  and  $p^+ <\theta$, \eqref{estJ1} implies that
$J(u)\leq 0$ for all $u\in Y_n$ with $\|u\|$ large enough. This completes $(H2)$ and thus the proof is done.
\end{proof}

\subsection{$(p(\cdot)-1)$-sublinear at infinity}

In this part we consider problem \eqref{1.1} when $f$ satisfies $(p(\cdot)-1)$-sublinear at infinity $($the condition $(F5)).$ 
We assume that
\begin{itemize}
\item[($F4$)]   $f\in L^\infty(\Omega \times [-T,T])$ for each $T\in \mathbb{R}_+$;
\item[($F5$)] $\lim_{|t|\to \infty}\frac{f(x,t)}{|t|^{p^{-}-1}}=0$ uniformly for $x\in \Omega;$
\item[($F6$)] There exist a constant $t_0>0$ and a ball $B$ with $\overline{B}\subset \Omega$ such that $\int_B F(x,t_0)dx>0$.
\end{itemize}
Using direct methods and critical point theories in Calculus of Variations, we obtain the first result of existence of multiple nontrivial solutions for \eqref{1.1}.

\begin{theorem} \label{thm4.3}
Assume that $(w1), (A1)-(A4)$ and $(F4)-(F6)$ hold. Then there exists $\lambda_0>0$ such that  \eqref{1.1} has at least one nontrivial weak solution for every $\lambda>\lambda_0$. If in addition $(F3)$ holds in which $p^+<p^{\ast}_s(x)$ for all $x\in\overline{\Omega},$ then \eqref{1.1} has at least two nontrivial weak solutions for every $\lambda>\lambda_0$.
\end{theorem}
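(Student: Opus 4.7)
The plan is to produce the first nontrivial weak solution as a global minimizer of $J$ via the direct method, and, in the presence of $(F3)$ with $p^+<p_s^\ast(x)$ on $\overline\Omega$, to obtain a second nontrivial solution by the Mountain Pass Theorem with the first minimizer playing the role of the point beyond the ring. The machinery is already in place: Lemma~\ref{S+} will supply both the sequential weak lower semicontinuity of $J$ and the $(S_+)$-property of $J'$. Note that $(F4)$--$(F5)$ imply $(\widetilde F 1)$ with constant datum $h$ and constant exponent $q\equiv p^-<p_s^\ast(x)$, so Lemma~\ref{S+} is applicable.

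First I would verify that $J$ is coercive for every $\lambda>0$. Combining $(F4)$ with $(F5)$ yields, for each $\epsilon>0$, a constant $C_\epsilon$ with $|F(x,t)|\le C_\epsilon+\epsilon|t|^{p^-}$ on $\Omega\times\mathbb R$; by $(\widetilde A 4)$ and Proposition~\ref{prop2}, $\Phi(u)\ge\tfrac{\bar C}{p^+}\|u\|^{p^-}$ for $\|u\|>1$. Since $p^-<p_s^\ast(x)$ for all $x$, Proposition~\ref{prop6} provides $X\hookrightarrow L^{p^-}(\Omega)$, and choosing $\epsilon$ small in terms of $\lambda$ absorbs the bad term to give $J(u)\to\infty$ as $\|u\|\to\infty$. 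Together with the sequential weak lower semicontinuity, a standard minimizing-sequence argument produces $u_1\in X$ realizing $\inf_X J$, which is a critical point and hence a weak solution.

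Nontriviality of $u_1$ for large $\lambda$ will come from exhibiting $v\in X$ with $\Psi(v)>0$ and then setting $\lambda_0:=\Phi(v)/\Psi(v)$. Fix an open ball $B'$ with $\overline B\subset B'$ and $\overline{B'}\subset\Omega$, and a cut-off $\phi\in C_c^\infty(\Omega)$ with $\phi\equiv 1$ on $B$, $\mathrm{supp}(\phi)\subset B'$, and $0\le\phi\le 1$; set $v:=t_0\phi$. By $(F4)$, $M:=\sup_{x\in\Omega,\,|s|\le t_0}|F(x,s)|<\infty$, so
$$\Psi(v)\ge\int_B F(x,t_0)\,dx-M\,|B'\setminus B|,$$
and shrinking $B'$ makes this strictly positive. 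The local integrability of $w$ from $(w1)$ together with $\phi\in C_c^\infty(\Omega)$ guarantees $v\in X$ with $\Phi(v)<\infty$. Then for every $\lambda>\lambda_0$ one has $J(u_1)\le J(v)<0=J(0)$, so $u_1\ne 0$.

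For the second assertion, pick $r\in(p^+,(p_s^\ast)^-)$, possible under the extra hypothesis $p^+<p_s^\ast(x)$. Splicing $(F3)$ with $(F4)$--$(F5)$ produces a bound $|F(x,t)|\le\tfrac{\epsilon}{p^+}|t|^{p^+}+C_\epsilon|t|^r$ valid for every $t$. Using $(\widetilde A 4)$ to get $\Phi(u)\ge\tfrac{\bar C}{p^+}\|u\|^{p^+}$ on $\|u\|\le 1$ and Proposition~\ref{prop6} to embed $X$ continuously into $L^{p^+}(\Omega)$ and $L^r(\Omega)$, a standard two-term computation furnishes $\tau,\rho>0$ with $J(u)\ge\rho$ on $\|u\|=\tau$. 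Since $J(u_1)<0<\rho$, necessarily $\|u_1\|>\tau$. The $(PS)$ condition follows as usual: coerciveness bounds any $(PS)$-sequence and the $(S_+)$-property of $J'$ from Lemma~\ref{S+} upgrades its weak limit to a strong one. The Mountain Pass Theorem with endpoint $e:=u_1$ then delivers a critical point $u_2$ at level $\ge\rho>0$; in particular $u_2\ne u_1$ and $u_2\ne 0$, giving the second nontrivial weak solution. The genuinely delicate step is the construction of $v$: one must choose $B'$ so that the $\Psi$-loss on $B'\setminus B$ is dominated by the positive contribution on $B$, while keeping $v\in X$ despite a possibly singular or vanishing $w$; both requirements are met thanks to $(w1)$ and $(F4)$. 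The rest is the standard minimization/mountain-pass routine once the growth of $F$ has been split into the three regimes near $0$, on bounded sets, and at infinity.
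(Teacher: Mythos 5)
Your proposal is correct and follows essentially the same route as the paper: coercivity plus weak lower semicontinuity give a global minimizer, a cut-off function supported near $B$ yields negativity of $J$ for $\lambda$ beyond an explicit $\lambda_0$ (hence nontriviality), and under $(F3)$ the splitting $|F(x,t)|\le \delta|t|^{p^+}+C|t|^{q}$ with a constant exponent $q\in(p^+,p_s^{\ast}(x))$ produces the mountain-pass ring around $0$, with $(PS)$ obtained from coercivity and the $(S_+)$-property. The only cosmetic difference is that the paper fixes the shrinking neighbourhood so that the loss term is at most half of $\int_B F(x,t_0)\,dx$ before defining $\lambda_0$, which is exactly your construction of $B'$.
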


\begin{proof}
It follows from $(F5)$ that for each $\epsilon>0$, there exists $T=T(\epsilon)>0$ such that
$$|f(x,t)|\leq \epsilon |t|^{p^--1}, \ \text{for a.e.}\ 
x\in \Omega \ \text{and all} \ |t|>T.$$
By this and $(F4)$ we deduce that
\begin{equation}\label{est1f}
|f(x,t)|\leq C(\epsilon)+\epsilon |t|^{p^--1}, \ \text{for a.e.}\  x\in \Omega \ \text{ and all}\ t\in \mathbb{R},
\end{equation}
where $C(\epsilon)=\underset{x\in \Omega, |t|\leq T}{\operatorname{ess \sup}}|f(x,t)|.$ So  by Lemma ~\ref{diff}, $J\in C^{1}(X,\mathbb{R})$ and by Lemma~\ref{S+}, $J$ is sequentially weakly lower semicontinuous. We claim that $J$ is coercive. Indeed, we deduce from \eqref{est1f} that
\begin{equation}\label{est1F}
|F(x,t)|\leq C(\epsilon)|t|+\frac{\epsilon}{p^-}|t|^{p^-}, \ \text{for a.e.}\  x\in \Omega \ \text{ and all}\ t\in \mathbb{R}.
\end{equation}
Since $X \hookrightarrow L^{p^-}(\Omega )\hookrightarrow L^1(\Omega ),$ there are positive constants $C_3, C_4$ such that $|u|_{L^{p^-}(\Omega)}\leq C_3\|u\|,|u|_{L^1(\Omega)}\leq C_4\|u\|$ for all $u\in X$. So for any $u\in X$, $(\widetilde{A}4)$ and \eqref{est1F} yield
\begin{eqnarray*}
J(u)&\geq& \frac{\bar{C}}{p^+}(\|u\|^{p^-}-1)-\lambda C(\epsilon)\int_{\Omega}|u|dx-\frac{\lambda\epsilon}{p^-}\int_{\Omega}|u|^{p^-}dx\\
&\geq& \left(\frac{\bar{C}}{p^+}-\frac{\lambda\epsilon C^{p^-}_3}{p^-}\right)\|u\|^{p^-}-\lambda C(\epsilon)C_4\|u\|- \frac{\bar{C}}{p^+}.
\end{eqnarray*}
Thus, choosing $\epsilon>0$ such that $\frac{\bar{C}}{p^+}-\frac{\lambda\epsilon C^{p^-}_3}{p^-}>0$  we obtain from the last estimate that $J$ is coercive since $p^->1.$ Therefore, $J$ has a global minimizer $u_1$ on $X$.

Next, we show that  there exists  $u\in X$ such that $J(u)<0$. For each  $\epsilon>0,$ let $B_\epsilon:=\{x \in \Omega : \operatorname{dist} (x, B) \leq \epsilon\},$ where $B$ is the ball given in $(F6).$ Take $\epsilon>0$ small enough such that $\overline{B}_\epsilon \subset \Omega$. Then there exists $u_\epsilon \in C_c^1(\Omega)$ such that
$$
u_\epsilon(x):=
\begin{cases} t_0,\quad & x \in B,\\
0, \quad & x \in \Omega \setminus B_\epsilon,
\end{cases}
$$
and $0\leq u_\epsilon(x)\leq t_0, \forall x\in \Omega,$ where $t_0$ is given in $(F6)$. Thus, $u_\epsilon \in X$ and for a.e. $x\in \Omega,$
$$|F(x,u_\epsilon(x))|\leq \int_{0}^{u_\epsilon(x)}|f(x,s)|ds\leq  |f|_{L^\infty(\Omega\times[-t_0,t_0])}u_\epsilon(x)\leq t_0|f|_{L^\infty(\Omega\times[-t_0,t_0])}.$$
Thus, we estimate
\begin{align} \label{est1J}
J(u_\epsilon)
&=\int_\Omega A(x,\nabla u_\epsilon)dx -\lambda\int_\Omega F(x,u_\epsilon(x))dx\nonumber\\
&=\int_\Omega A(x,\nabla u_\epsilon)dx -\lambda\int_{B} F(x,t_0)dx-\lambda\int_{B_\epsilon \setminus B}  F(x,u_\epsilon(x))dx \nonumber \\
&\le \int_\Omega A(x,\nabla u_\epsilon)dx  -\lambda\left[\int_{B}  F(x,t_0)dx-t_0 |f|_{L^\infty(\Omega\times[-t_0,t_0])} |B_\epsilon \setminus B|\right],
\end{align}
where $|B_\epsilon \setminus B|$ is the Lebesgue measure of $B_\epsilon \setminus B$. Fixing a sufficiently small constant $\epsilon_0>0$ such that
$$t_0 |f|_{L^\infty(\Omega\times[-t_0,t_0])} |B_{\epsilon_0} \setminus B|\leq\frac{1}{2}\int_{B}  F(x,t_0)dx,$$
we obtain from \eqref{est1J} that
$$J(u_{\epsilon_0})
\leq \int_\Omega A(x,\nabla u_{\epsilon_0})dx  -\frac{\lambda}{2}\int_{B}  F(x,t_0)dx.$$
 Then  $J(u_{\epsilon_0})<0$ for all  $\lambda> \lambda_0,$ where $\lambda_0:=\frac{2\int_\Omega A(x,\nabla u_{\epsilon_0})dx }{\int_{B}  F(x,t_0)dx}.$ Consequently, for any $\lambda> \lambda_0,$ the  global minimizer $u_1$ satisfies $J(u_1)<0=J(0)$. It means that for any $\lambda > \lambda_0$, the problem \eqref{1.1} has a nontrivial solution  $u_1$.

We now assume in addition that $(F3)$ holds in which $p^+<p^{\ast}_s(x)$ for all $x\in\overline{\Omega}$. Note that the coercivity of $J$ and the $(S_+)$-property of $J'$ imply that  $J$ satisfies the $(PS)$ condition.  We claim that  $J$ also satisfies the geometries in the Mountain Pass Theorem. Indeed, since $p^+<p^\ast_s(x)$ for all $x\in \overline{\Omega },$ there is a constant $q$ such that $p^+<q<p^\ast_s(x)$ for all $x\in \overline{\Omega }$. Thus, we have $X \hookrightarrow \hookrightarrow L^{q }(\Omega )\hookrightarrow L^{p^+}(\Omega )$ in the view of Proposition~\ref{prop6} and the boundedness of $\Omega.$ Let $C_{p^+}$ and $C_{q}$ be two positive imbedding constants such that $|u|_{L^{p^+}(\Omega)}\leq C_{p^+}\|u\|,|u|_{L^{q}(\Omega)}\leq C_{q}\|u\| $ for all $u\in X$.  By $(F3)$ and $(F5)$ we have that for $\delta =\frac{1}{2\lambda p^+ C_{p^+}^{p^+}}$, there exist $\gamma_1>1$ and $\gamma_2>0$ such that
$$|f(x,t)|\leq \delta |t|^{p^--1}\leq \delta |t|^{q-1},\ \text{for a.e.}\ x\in \Omega\ \text{and all}\ |t|>\gamma_1,$$
and
$$|f(x,t)|\leq p^+\delta |t|^{p^+-1},\ \text{for a.e.}\ x\in \Omega\ \text{and all}\ |t|<\gamma_2.$$
Combining these with $(F4)$ we obtain that
\begin{equation} \label{est2F}
|F(x,t)|\leq \delta |t|^{p^+}+C_5|t|^q,\ \text{for a.e.}\ x\in \Omega\ \text{and all}\ t\in \mathbb{R} ,
\end{equation}
where $C_5=C_5(\delta)$ is some positive constant. It implies that for $u\in X$ with $\|u\|<1,$ we have
\begin{eqnarray*}
J(u)&\geq& \frac{\bar{C}}{p^+}\int_{\Omega}w(x)|\nabla u|^{p(x)}dx-\lambda \delta \int_{\Omega}|u|^{p^+}dx-\lambda C_5\int_{\Omega}|u|^qdx\\
&\geq& \frac{\bar{C}}{p^+}\|u\|^{p^+}-\lambda \delta C_{p^+}^{p^+}\|u\|^{p^+}- \lambda C_5 C_q^q\|u\|^q\\
&=& \frac{\bar{C}}{2p^+}\|u\|^{p^+}- \lambda C_5 C_q^q\|u\|^q.
\end{eqnarray*}
Taking $0<r<\min \big\{1,\|u_1\|, \big(\frac{\bar{C}}{2p^+\lambda C_5C_q^q}\big)^{\frac{1}{q-p^+}}\big\}$ and letting $\rho= \frac{\bar{C}}{2p^+}r^{p^+}- \lambda C_5 C_q^q r^q,$ we have
$$J(u)\geq \rho, \forall u\in X\ \text{with}\ \|u\|=r.$$
Thus, $J$ has the second critical point $u_2,$ which is a weak solution to \eqref{1.1} and $J(u_2)\geq \rho>0=J(0);$ so $u_2 \neq u_1$ and $u_2\neq 0$.
\end{proof}
\begin{remark}
Using the similar argument as above with the same assumptions on $a$ and $w$ as in Theorem~\ref{thm4.3}, we can show that \eqref{1.1} has a weak solution in $X$ for every $\lambda>0$ if  $(\widetilde{F}1)$ holds in which $q^+ < p^-.$ That solution  is a global minimizer of $J,$ which is nontrivial  if $(F4),(F6)$ hold in addition and $\lambda$ large enough.
\end{remark}
Next we shall establish the existence of three solutions  for \eqref{1.1} by using Theorem~\ref{three}. Defining $\Phi,\Psi$ as in \eqref{form_of_oper}, we have the following result.
\begin{theorem}\label{thm4.4}
Suppose that $(w1),(A1)-(A4)$ and $(F3)-(F6)$ hold in which $p^+<p^{\ast}_s(x)$ for all $x\in\overline{\Omega}$. Then, there exist $r_0>0$ and $u_0\in X$ with $r_0<\Phi(u_0)$ and $\underset{\Phi(u)<r_0}{\operatorname{sup}}\Psi(u)<r_0\Psi(u_0)/\Phi(u_0)$ such that ~\eqref{1.1} has at least three distinct solutions in X for all $\lambda\in  \Lambda = (\Phi(u_0)/\Psi(u_0), r_0/\underset{\Phi(u)<r_0}{\operatorname{sup}}\Psi(u)).$
\end{theorem}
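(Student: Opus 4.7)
The strategy is to apply Theorem~\ref{three} directly, with $\Phi, \Psi$ defined as in \eqref{form_of_oper}. I would first verify the standing hypotheses on $\Phi, \Psi$. By $(\widetilde A4)$ and Proposition~\ref{prop2}, $\Phi(u) \geq \frac{\bar C}{p^+}(\|u\|^{p^-}-1)$, so $\Phi$ is coercive; Lemma~\ref{diff}(i) gives $\Phi \in C^1(X,\mathbb R)$ with $(\Phi')^{-1}$ continuous on $X^{\ast}$; sequential weak lower semicontinuity of $\Phi$ follows from Lemma~\ref{S+} applied with $\lambda=0$. Lemma~\ref{diff}(ii) provides $\Psi \in C^1(X,\mathbb R)$ with $\Psi'$ sending weakly convergent sequences to strongly convergent ones, hence $\Psi'$ is compact since $X$ is reflexive. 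Clearly $\Phi(0)=\Psi(0)=0$ since $A(x,0)=F(x,0)=0$.

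Next I would produce $u_0$ via the test-function construction from the proof of Theorem~\ref{thm4.3}. Fix $\epsilon_0>0$ small enough that $\overline B_{\epsilon_0}\subset\Omega$ and $t_0|f|_{L^\infty(\Omega\times[-t_0,t_0])}|B_{\epsilon_0}\setminus B| \leq \tfrac12\int_B F(x,t_0)\,dx$, and take $u_0 \in C_c^1(\Omega)$ with $u_0 \equiv t_0$ on $B$, $u_0\equiv 0$ outside $B_{\epsilon_0}$, and $0\leq u_0 \leq t_0$. Then $(F6)$ yields $\Psi(u_0) \geq \tfrac12\int_B F(x,t_0)\,dx > 0$, while $(\widetilde A2)$ gives $0<\Phi(u_0)<\infty$. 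Set $\alpha := \Psi(u_0)/\Phi(u_0) > 0$, which is fixed independently of $r_0$.

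The heart of the argument is choosing $r_0$ small enough to satisfy both $r_0 < \Phi(u_0)$ and condition~(i). Since $p^+ < \min_{\overline\Omega} p^{\ast}_s$, fix $q \in (p^+, \min_{\overline\Omega} p^{\ast}_s)$; combining $(F3)$, $(F5)$ and $(F4)$ exactly as in the derivation of \eqref{est2F}, for any $\delta>0$ there is $C_5(\delta)>0$ with $|F(x,t)| \leq \delta |t|^{p^+} + C_5|t|^q$ a.e. Proposition~\ref{prop6} yields imbedding constants $C_{p^+}, C_q$ with $|u|_{L^{p^+}(\Omega)} \leq C_{p^+}\|u\|$ and $|u|_{L^q(\Omega)} \leq C_q\|u\|$, so
\[
\Psi(u) \leq \delta C_{p^+}^{p^+}\|u\|^{p^+} + C_5 C_q^{q}\|u\|^{q}, \quad \forall u \in X.
\]
On the other hand, $(\widetilde A4)$ together with Proposition~\ref{prop2} shows $\Phi(u) \geq \frac{\bar C}{p^+}\|u\|^{p^+}$ whenever $\|u\| \leq 1$, and $\Phi(u) \geq \bar C/p^+$ whenever $\|u\|>1$. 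Hence, as soon as $r_0 < \bar C/p^+$, the condition $\Phi(u) < r_0$ forces $\|u\|^{p^+} \leq (p^+/\bar C)\,r_0$, and therefore
\[
\sup_{\Phi(u)<r_0}\Psi(u) \leq \frac{\delta C_{p^+}^{p^+} p^+}{\bar C}\, r_0 + C_5 C_q^{q}\Big(\frac{p^+}{\bar C}\Big)^{q/p^+}\! r_0^{q/p^+}.
\]
Choosing $\delta$ first so that $\delta C_{p^+}^{p^+} p^+/\bar C < \alpha/2$, and then $r_0$ small (also less than $\Phi(u_0)$ and $\bar C/p^+$) so that the second term, which is of lower order as $q/p^+ > 1$, is below $\alpha r_0/2$, yields $\sup_{\Phi(u)<r_0}\Psi(u) < \alpha r_0 = r_0\Psi(u_0)/\Phi(u_0)$, i.e.\ condition~(i).

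Finally, for condition~(ii), the coercivity of $\Phi - \lambda\Psi = J$ for every $\lambda > 0$ is precisely the argument already carried out in the proof of Theorem~\ref{thm4.3}, relying only on $(F4)$, $(F5)$ and $(\widetilde A4)$, and thus holds uniformly for $\lambda \in \Lambda$. All hypotheses of Theorem~\ref{three} are then met, producing three distinct critical points of $\Phi - \lambda\Psi$, i.e.\ three weak solutions of \eqref{1.1}. I expect the delicate step to be (i): the linear-in-$r_0$ term coming from the $(F3)$ behavior near zero must be killed by choosing $\delta$ small, after which the $r_0^{q/p^+}$ term with $q/p^+ > 1$ is automatically subdominant as $r_0 \to 0^+$; it is crucial that $\alpha$ is fixed in advance so that the smallness required of $\delta$ and $r_0$ is independent of the later choices.
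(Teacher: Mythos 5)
Your proposal is correct and follows essentially the same route as the paper's proof: the same test function $u_{\epsilon_0}$ from Theorem~\ref{thm4.3} serves as $u_0$, the same bound $\|u\|^{p^+}<p^+r_0/\bar{C}$ derived from $(\widetilde{A}4)$ and Proposition~\ref{prop2} is combined with \eqref{est2F} to control $\sup_{\Phi(u)<r_0}\Psi(u)$, and the same two-step choice (first $\delta$, then $r_0$) exploits $q/p^+>1$ to verify condition (i) of Theorem~\ref{three}. The only difference is that you spell out the verification of the standing hypotheses on $\Phi$ and $\Psi$ in more detail than the paper, which simply cites Lemma~\ref{diff}.
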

\begin{proof}
Taking $u_{\epsilon_0}$, $q$ and the imbedding constants $C_q,C_{p^+}$ as in the proof of Theorem~\ref{thm4.3}, we have
$$\Psi(u_{\epsilon_0})=\int_{\Omega}F(x,u_{\epsilon_0})dx>\frac{1}{2}\int_{B}F(x,t_0)dx>0,\ \Phi(u_{\epsilon_0})= \int_{\Omega}A(x,\nabla u_{\epsilon_0})dx>0.$$
Here we note that $\int_{\Omega}A(x,\nabla v)dx=0$ if and only if $v=0.$ Let $r_0$ be such that $0<r_0<\min \{\frac{\bar{C}}{p^+},\Phi(u_{\epsilon_0})\}$. If $\Phi(u)<r_0$ then by $(\widetilde{A}4)$, we get that
$$\frac{\bar{C}}{p^+}\int_{\Omega}w(x)|\nabla u|^{p(x)}dx<r_0,\ \ \text{i.e.,} \ \ \int_{\Omega}w(x)|\nabla u|^{p(x)}dx<\frac{p^+r_0}{\bar{C}}<1.$$
This fact and Proposition~\ref{prop2} yield
$$\|u\|^{p^+}<\frac{p^+r_0}{\bar{C}}<1, \ \ \text{i.e.,} \ \ \|u\|<\big(\frac{p^+r_0}{\bar{C}}\big)^{\frac{1}{p^+}}.$$
Then we can estimate
$$\int_{\Omega}|u|^q dx=|u|_{L^q(\Omega)}^q\leq (C_q\|u\|)^q<\frac{C_q^q(p^+)^{\frac{q}{p^+}}r_0^{\frac{q}{p^+}}}{\bar{C}^{\frac{q}{p^+}}},$$
and
$$\int_{\Omega}|u|^{p^+} dx=|u|_{L^{p^+}(\Omega)}^{p^+}\leq (C_{p^+}\|u\|)^{p^+}<\frac{C_{p^+}^{p^+}p^+r_0}{\bar{C}}.$$
By these estimates and \eqref{est2F}, we have that for $u\in X$ with $\Phi(u)<r_0,$
\begin{eqnarray*}
\int_{\Omega}F(x,u)dx &\leq& \delta \int_{\Omega}|u|^{p^+}dx+C_5\int_{\Omega}|u|^q dx\\
&\leq& \frac{\delta C_{p^+}^{p^+}p^+r_0}{\bar{C}}+\frac{C_5C_q^q(p^+)^{\frac{q}{p^+}}r_0^{\frac{q}{p^+}}}{\bar{C}^{\frac{q}{p^+}}},
\end{eqnarray*}
where $\delta>0$ is arbitrarily fixed. Thus we have
$$\underset{\varPhi(u)<r_0}{\operatorname{sup}}\Psi(u)\leq r_0\left[\frac{\delta C_{p^+}^{p^+}p^+}{\bar{C}}+\frac{C_5C_q^q(p^+)^{\frac{q}{p^+}}r_0^{\frac{q}{p^+}-1}}{\bar{C}^{\frac{q}{p^+}}}\right].$$
Choosing $\delta>0$ such that $\frac{\delta C_{p^+}^{p^+}p^+}{\bar{C}}<\frac{1}{2}\frac{\Psi(u_{\epsilon_0})}{\Phi(u_{\epsilon_0})}$ and  $r_0>0$ small enough such that $\frac{C_5C_q^q(p^+)^{\frac{q}{p^+}}r_0^{\frac{q}{p^+}-1}}{\bar{C}^{\frac{q}{p^+}}}<\frac{1}{2}\frac{\Psi(u_{\epsilon_0})}{\Phi(u_{\epsilon_0})}$, we obtain that
$$\underset{\Phi(u)<r_0}{\operatorname{sup}}\Psi(u)<r_0\frac{\Psi(u_{\epsilon_0})}{\Phi(u_{\epsilon_0})}.$$
Note that $\Phi-\lambda \Psi$ is coercive for every $\lambda \geq 0$ (see the proof of Theorem \ref{thm4.3}). Applying Theorem~\ref{three} with $u_0=u_{\epsilon_0}$ in the view of Lemma~\ref{diff}, we have the conclusion.
\end{proof}

\begin{remark}\label{compare}
To show the nontriviality of solutions in \cite{Boureanu}, they used a stronger condition than $(F6);$
$$F(x,t_0)>0 ~\mbox{ for a.e.}~ x\in \Omega.$$ Moreover, we do not impose $p(x)\geq 2, \forall x\in \overline{\Omega}$ and the uniformly monotonicity of the operator $a$ as in \cite{Boureanu} to obtain Theorem~\ref{thm4.4}.
\end{remark}


\section{Uniqueness and non-negativeness}

In this section, we are concerned with the uniqueness and the nonnegativeness of solutions for \eqref{1.1}.

First, when we assume that $f$ is nonincreasing with respect to the second variable and $a$ holds $(A3),$ we have the unique existence by the Browder's theorem for monotone operators in the reflexive Banach spaces (see {\cite[Theorem 26.A]{Zeidler}}).
Since the proof is very similar to the proof of Theorem 3.4 in \cite{Ky1}, we shall omit it.


\begin{theorem} \label{thm7}
Assume that $(w1), (A2)-(A4)$ hold. Assume also that $(\widetilde{F}1)$  holds and $f:\Omega \times\mathbb{R}\to\mathbb{R}$ is nonincreasing with respect to the second variable. Then for every $\lambda>0$, \eqref{1.1} has a unique weak solution.
\end{theorem}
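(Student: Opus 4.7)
The plan is to recast the problem as finding a zero of the nonlinear operator $T:X\to X^{\ast}$ defined by $T(u):=\Phi'(u)-\lambda\Psi'(u)$, so that by the definition of weak solution, solutions of \eqref{1.1} correspond bijectively to $T^{-1}(0)$. To conclude that this preimage is a singleton I would apply Browder's theorem for monotone operators (\cite[Theorem 26.A]{Zeidler}): on the reflexive Banach space $X$, if $T$ is demicontinuous, strictly monotone and coercive then $T$ is bijective, and in particular $T(u)=0$ has a unique solution.

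Continuity and strict monotonicity are essentially immediate consequences of what has been set up. Lemma~\ref{diff}(i) gives $\Phi'\in C(X,X^{\ast})$ and Lemma~\ref{diff}(ii) gives $\Psi'\in C(X,X^{\ast})$, so $T$ is continuous. The assumption $(A3)$ yields strict monotonicity of $\Phi'$, while the nonincreasingness of $f(x,\cdot)$ gives the pointwise inequality $\bigl(f(x,u)-f(x,v)\bigr)(u-v)\le 0$ and hence $\langle -\Psi'(u)+\Psi'(v),u-v\rangle\ge 0$. Since on $X=W^{1,p(x)}_{0}(w,\Omega)$ we work with the equivalent norm $\|u\|=\big|\,|\nabla u|\,\big|_{L^{p(x)}(w,\Omega)}$, any $u\ne v$ in $X$ must have $\nabla u\ne\nabla v$ on a set of positive measure, so the $\Phi'$-contribution makes $\langle T(u)-T(v),u-v\rangle$ strictly positive.

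The key step is coercivity. From $(A4)$ and Proposition~\ref{prop2}(ii), whenever $\|u\|\ge 1$,
$$\langle \Phi'(u),u\rangle=\int_\Omega a(x,\nabla u)\cdot\nabla u\,dx\ge \bar{C}\int_\Omega w(x)|\nabla u|^{p(x)}\,dx\ge \bar{C}\|u\|^{p^{-}}.$$
For the Nemytskii term I would exploit the monotonicity of $f$ a second time: for every $t\in\mathbb{R}$ one has $f(x,t)t\le f(x,0)t\le |f(x,0)|\,|t|\le h(x)|t|$ (the last bound being $(\widetilde{F}1)$ at $t=0$), and therefore
$$\int_\Omega f(x,u)u\,dx\le \int_\Omega h(x)|u|\,dx\le 2|h|_{L^{q'(x)}(\Omega)}|u|_{L^{q(x)}(\Omega)}\le c\|u\|,$$
by the H\"older inequality of Proposition~\ref{prop1} together with the embedding $X\hookrightarrow L^{q(x)}(\Omega)$ of Proposition~\ref{prop6}; here the hypothesis $h^{q'}\in L^1(\Omega)$ is precisely $h\in L^{q'(x)}(\Omega)$ via Proposition~\ref{prop2}. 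Combining the two bounds yields
$$\frac{\langle T(u),u\rangle}{\|u\|}\ge \bar{C}\|u\|^{p^{-}-1}-\lambda c\longrightarrow +\infty \quad\text{as}\quad \|u\|\to\infty,$$
because $p^{-}>1$, and Browder's theorem then delivers uniqueness (and existence) of the weak solution.

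The main obstacle I see in this argument is precisely the coercivity estimate above: without exploiting the monotonicity of $f$, the growth condition $(\widetilde{F}1)$ alone only yields a bound of the form $\int_\Omega |f(x,u)u|\,dx\le C(1+\|u\|^{q^{+}})$, which can dominate the gain $\|u\|^{p^{-}}$ coming from $(A4)$ since $q^{+}$ can exceed $p^{-}$. It is the nonincreasingness of $f(x,\cdot)$ that collapses the growth to the sublinear bound $\int h|u|$, and this is the only place where that hypothesis is really used beyond monotonicity of $-\Psi'$.
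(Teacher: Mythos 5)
Your proposal is correct and is essentially the paper's own (omitted) argument: the authors also invoke Browder's theorem for monotone operators, with strict monotonicity coming from $(A3)$ plus the nonincreasingness of $f$, and coercivity from $(A4)$ together with the sublinear bound $f(x,t)t\le f(x,0)t\le h(x)|t|$. The only cosmetic point is that Theorem~\ref{thm7} does not assume $(A1)$, so you should define the principal part directly as the operator $u\mapsto\int_{\Omega}a(x,\nabla u)\cdot\nabla(\cdot)\,dx$ (whose continuity follows from $(A2)$ alone) rather than as $\Phi'$ via Lemma~\ref{diff}(i); the estimates are unchanged.
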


To show the existence of a unique nontrivial nonnegative solution, we need an $L^\infty$-bound of solutions for \eqref{1.1}. 
For such an $L^\infty$-bound, more restrictions on  $w$ are required.
\begin{itemize}
\item [$(w2)$] $ \ w \in L^{\infty}(\Omega) $ and $ w^{-s^{+}}\in L^{1}(\Omega)$ for some $s\in C(\overline{\Omega })$ such that  $s(x)\in \left(\frac{N}{p(x)},\infty\right)\cap \Big[\frac{1}{p(x)-1},\infty\Big)$ for all $x\in \overline{\Omega }.$
\end{itemize}
\begin{example}
It is easy to verify that $w(x)=|x|^a$ $(0\leq a<\min\{p^-,N(p^--1)\})$ satisfies $(w2)$ with $s(x)\equiv\frac{N-\epsilon}{a},$ where $0<\epsilon<\min\left\{\frac{N(p^--a)}{p^-},\frac{N(p^--1)-a}{p^--1}\right\}.$
\end{example}
We have the following $L^\infty$-bound result.
\begin{lemma}\label{thm8}
Assume that $(w2), (A2), (A4)$ and $(F1)$ hold. Then for every $\lambda>0$, there exist positive constants $\alpha,\beta$ such that, if $u$ is a weak solution of \eqref{1.1}, then $u\in L^\infty (\Omega),$ and
\begin{equation*}
|u|_{L^\infty(\Omega)} \leq \alpha\left[1+\left(\int_{\Omega} |u|^{\widetilde{q}(x) }dx\right)^{\beta}\right],
\end{equation*}
where  $\widetilde{q}(x):= \max \{p(x), q(x)\}$ for all $x\in\overline{\Omega}.$
\end{lemma}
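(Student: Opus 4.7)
The plan is to establish the $L^\infty$-bound by a De Giorgi-type level-set iteration adapted to the weighted variable-exponent setting, combining the coercivity $(A4)$, the growth $(F1)$, the Sobolev embedding (Proposition~\ref{prop6}), and the modular comparison (Proposition~\ref{prop2}). Fix $\lambda>0$ and a weak solution $u\in X$. For each level $k\ge 1$ put $A_k:=\{x\in\Omega:|u(x)|>k\}$ and test the weak formulation with $\varphi_k:=\operatorname{sgn}(u)(|u|-k)_+\in X$. Using $(A4)$ on the left and $(F1)$ on the right, and noting that $0\le|u|-k\le|u|$ and $|u|\ge k\ge 1$ on $A_k$,
$$
\bar C\int_{A_k}w(x)|\nabla u|^{p(x)}\,dx
\le C_1\int_{A_k}\bigl(1+|u|^{q(x)-1}\bigr)(|u|-k)\,dx
\le C_2\int_{A_k}|u|^{\widetilde q(x)}\,dx.
$$

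Next I would promote the weighted $p(x)$-modular on the left to an unweighted Sobolev norm of $(|u|-k)_+$. Because $(w2)$ furnishes the uniform bound $w^{-s^+}\in L^1(\Omega)$ with $s^+>N/p^-$, the pointwise Young inequality with exponents $\tfrac{s(x)+1}{s(x)}$ and $s(x)+1$, applied to the split $|\nabla u|^{p_s(x)}=(w^{1/p(x)}|\nabla u|)^{p_s(x)}w^{-p_s(x)/p(x)}$, gives
$$
|\nabla u|^{p_s(x)}\le w(x)|\nabla u|^{p(x)}+w(x)^{-s(x)}.
$$
Integration over $A_k$, Proposition~\ref{prop6} applied to the embedding $W^{1,p_s(x)}_0(\Omega)\hookrightarrow L^{r(x)}(\Omega)$ with $r\in C_+(\overline\Omega)$ chosen so that $\widetilde q(x)<r(x)<p_s^\ast(x)$ pointwise (possible since $\widetilde q$ and $p_s^\ast$ are continuous on compact $\overline\Omega$ and $p(x)<p_s^\ast(x)$ by $(w1)$), together with Proposition~\ref{prop2} to deal with modulars above or below $1$, produce an inequality of the form
$$
\Bigl(\int_{A_k}(|u|-k)^{r(x)}\,dx\Bigr)^{\!1/r^+}
\le C_4\Bigl(1+\int_{A_k}|u|^{\widetilde q(x)}\,dx\Bigr)^{\!1/p^-}.
$$

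The final step is a Stampacchia-type truncation. For $k'>k\ge 1$ one has $(k'-k)^{r^-}|A_{k'}|\le\int_{A_k}(|u|-k)^{r(x)}\,dx$, while a Hölder split based on $r(x)>\widetilde q(x)$ yields
$$
\int_{A_k}|u|^{\widetilde q(x)}\,dx\le |A_k|^{\sigma}\Bigl(1+\int_\Omega|u|^{r(x)}\,dx\Bigr)^{1-\sigma},
$$
with $\sigma\in(0,1)$. Combining these with the previous display and using $|A_1|\le\int_\Omega|u|^{\widetilde q(x)}\,dx$ produces a recursion $|A_{k'}|\le M(k'-k)^{-\mu}|A_k|^{1+\delta}$ valid for all $k'>k\ge 1$, where $\delta,\mu>0$ depend only on $p^\pm,q^\pm,s^\pm,N$ and $M$ depends polynomially on $\int_\Omega|u|^{\widetilde q(x)}\,dx$. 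The classical Ladyzhenskaya-Ural'tseva/Stampacchia iteration lemma then produces a level $k_\ast\le\alpha\bigl[1+(\int_\Omega|u|^{\widetilde q(x)}\,dx)^\beta\bigr]$ with $|A_{k_\ast}|=0$, which is the stated bound.

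The principal obstacle is the non-homogeneity of the variable-exponent modulars: every step (the test-function identity, the Young/Hölder splits, the Sobolev embedding, the recursion) has to be carried out so that all exponents and constants are independent of $u$ and of the truncation level $k$, forcing repeated case splits via Proposition~\ref{prop2}. Strengthening $(w1)$ to $(w2)$ is essential precisely here: the uniform exponent $s^+$ together with $w\in L^\infty$ is what makes the Young transfer from $L^{p(x)}(w,\Omega)$ to $W^{1,p_s(x)}(\Omega)$ carry absolute constants, and thereby makes the Stampacchia recursion self-improving with the explicit polynomial dependence on $\int_\Omega|u|^{\widetilde q(x)}\,dx$.
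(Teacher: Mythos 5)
Your overall strategy --- testing with $\operatorname{sgn}(u)(|u|-k)_+$, passing from the weighted modular to an unweighted Sobolev quantity, and closing a Stampacchia-type level-set recursion --- is exactly the route the paper takes (its proof simply repeats the De Giorgi--Stampacchia iteration of Theorem 4.2 in \cite{Ky1}). However, there is a genuine gap in your implementation, located precisely at the step you identify as the crux. The pointwise Young inequality $|\nabla u|^{p_s(x)}\le w(x)|\nabla u|^{p(x)}+w(x)^{-s(x)}$ produces, after integration over $A_k$, the \emph{additive} term $\int_{A_k}w^{-s(x)}\,dx$, which you then absorb into the ``$1+$'' of the display $\bigl(\int_{A_k}(|u|-k)^{r(x)}dx\bigr)^{1/r^+}\le C_4\bigl(1+\int_{A_k}|u|^{\widetilde q(x)}dx\bigr)^{1/p^-}$. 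That standalone ``$1$'' does not decay as $|A_k|\to 0$, so the subsequent claim that one obtains $|A_{k'}|\le M(k'-k)^{-\mu}|A_k|^{1+\delta}$ is not justified: the Stampacchia lemma requires strictly superlinear decay in $|A_k|$ on the right-hand side, and an additive constant destroys it. Nor does $(w2)$ rescue this: $w^{-s^+}\in L^1(\Omega)$ together with $w\in L^\infty(\Omega)$ gives $\int_{A_k}w^{-s(x)}dx\to 0$ only by absolute continuity, with no power-of-$|A_k|$ rate (take $s$ constant to see there is no integrability gain over $(w1)$). The fix is to keep the weight transfer \emph{multiplicative}: either apply Proposition~\ref{prop6} directly to $v_k=(|u|-k)_+\in X$ and convert norms to modulars via Proposition~\ref{prop2} (splitting on whether the norm is below or above $1$), or use the H\"older-inequality form $\int_{A_k}|\nabla u|^{p_{s}(x)}dx\le C\bigl(\int_{A_k}w|\nabla u|^{p(x)}dx\bigr)^{\theta}$ with the weight contributing only the fixed factor $\bigl(\int_\Omega w^{-s}dx\bigr)^{1-\theta}$. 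Then every term on the right carries a positive power of $\int_{A_k}|u|^{\widetilde q(x)}dx$ and the recursion self-improves.

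A second, related defect: your H\"older split $\int_{A_k}|u|^{\widetilde q(x)}dx\le|A_k|^{\sigma}\bigl(1+\int_\Omega|u|^{r(x)}dx\bigr)^{1-\sigma}$ makes the constant $M$ depend on $\int_\Omega|u|^{r(x)}dx$ with $r>\widetilde q$, which is \emph{not} controlled by $\int_\Omega|u|^{\widetilde q(x)}dx$; as written this would yield a bound in terms of a stronger norm than the statement allows. The standard remedy is the two-level trick: on $A_{k'}$ one has $|u|\le\bigl(1+\tfrac{k}{k'-k}\bigr)(|u|-k)$, so the recursion can be written entirely in terms of the truncated quantities $\int_{A_{k_n}}(|u|-k_n)^{r(x)}dx$ and $|A_{k_n}|$, with $\int_\Omega|u|^{\widetilde q(x)}dx$ entering only through the starting level via Chebyshev, $|A_{k_0}|\le k_0^{-\widetilde q^-}\int_\Omega|u|^{\widetilde q(x)}dx$. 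With these two repairs your argument coincides with the one the paper invokes.
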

\begin{proof}
Noting $p(x)\leq \widetilde{q}(x)<p_{s}^{\ast}(x),\ \forall x\in \overline{\Omega }$ and $| f(x,t)| \leq 2C\left(1+| t| ^{\widetilde{q}(x) -1}\right)$ for a.e. $x\in \Omega$ and all $t\in\mathbb{R}$ and repeating the argument used in \cite[Proof of Theorem 4.2]{Ky1}, we get the desired conclusion.
\end{proof}

Employing the $L^\infty$-bound  and cut-off method, we obtain the existence of a unique nontrivial nonnegative solution which is an extension of Theorem 5.1 in \cite{Ky1} for a single degenerate $p(x)$-Laplacian. Unlike the assumption in \cite{Ky1}, we do not require the continuity of $f$ for the last main result.
\begin{theorem} \label{thm9}
Assume that $(w2), (A1)-(A4)$ hold. Assume also that $f:\Omega \times\mathbb{R}\to\mathbb{R}$ is nonincreasing with respect to the second variable on $[0,\infty)$ such that $f(x,0)\geq 0,$ $f(x,0)\not\equiv 0$ and $f(\cdot,t)\in L^{\infty}(\Omega)$ for each $t\geq 0$. Then for every $\lambda>0,$ \eqref{1.1} has a nontrivial nonnegative weak solution and this solution is the unique solution if $f$  is nonincreasing with respect to the second variable on $\mathbb{R}.$
\end{theorem}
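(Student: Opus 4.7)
The plan is to construct a bounded nonlinearity $\tilde{f}_M$ that is monotone on all of $\mathbb{R}$, apply Theorem~\ref{thm7} to the corresponding truncated problem, and then remove the truncation via an $L^\infty$-bound uniform in $M$. Concretely, for each $M>0$ set $\tau_M(t):=\min\{M,\max\{0,t\}\}$ and define $\tilde{f}_M(x,t):=f(x,\tau_M(t))$. Since $\tau_M$ is nondecreasing and $f(x,\cdot)$ is nonincreasing on $[0,\infty)$, $\tilde{f}_M(x,\cdot)$ is nonincreasing on all of $\mathbb{R}$, and
\[
|\tilde{f}_M(x,t)|\leq \max\{|f(\cdot,0)|_{L^\infty(\Omega)},|f(\cdot,M)|_{L^\infty(\Omega)}\}
\]
for a.e.\ $x\in\Omega$ and every $t\in\mathbb{R}$, so $\tilde f_M$ satisfies $(\widetilde{F}1)$. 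Theorem~\ref{thm7} then furnishes a unique weak solution $u_M\in X$ of the truncated problem.

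To establish $u_M\geq 0$, I would test the truncated equation with $-u_M^-\in X$. On $\{u_M<0\}$ one has $\tau_M(u_M)=0$, so the right-hand side becomes $-\lambda\int_{\{u_M<0\}}f(x,0)\,u_M^-\,dx\leq 0$ by $f(x,0)\geq 0$, while by $(A4)$ the left-hand side equals
\[
\int_{\{u_M<0\}} a(x,\nabla u_M)\cdot\nabla u_M\,dx\geq \bar{C}\int_{\{u_M<0\}} w(x)|\nabla u_M|^{p(x)}\,dx\geq 0.
\]
Both integrals must therefore vanish, forcing $\nabla u_M^-=0$ a.e.\ and, via the equivalent norm on $X$, $u_M^-\equiv 0$. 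The nonnegativity of $u_M$ together with the monotonicity of $f(x,\cdot)$ on $[0,\infty)$ now yields the $M$-independent pointwise bound $0\leq \tilde{f}_M(x,u_M(x))\leq f(x,0)$. Rerunning the Moser-type iteration underlying Lemma~\ref{thm8} with this uniform upper bound on the right-hand side in place of the growth condition $(F1)$ produces a constant $K>0$, independent of $M$, with $|u_M|_{L^\infty(\Omega)}\leq K$. Choosing $M\geq K$ gives $\tau_M(u_M)=u_M$ a.e., so $\tilde{f}_M(\cdot,u_M)=f(\cdot,u_M)$ and $u_M$ is a nonnegative weak solution of \eqref{1.1}.

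Nontriviality is immediate: if $u_M\equiv 0$ the equation would collapse to $\lambda\int_\Omega f(x,0)\varphi\,dx=0$ for every $\varphi\in X$, contradicting $f(x,0)\not\equiv 0$. For uniqueness under the stronger hypothesis that $f(x,\cdot)$ is nonincreasing on all of $\mathbb{R}$, given two solutions $u_1,u_2\in X$ I would test with $u_1-u_2$ to obtain
\[
\int_\Omega[a(x,\nabla u_1)-a(x,\nabla u_2)]\cdot(\nabla u_1-\nabla u_2)\,dx=\lambda\int_\Omega[f(x,u_1)-f(x,u_2)](u_1-u_2)\,dx;
\]
the right side is $\leq 0$ by the monotonicity of $f$, while by $(A3)$ the left side is nonnegative and equals zero only when $\nabla u_1=\nabla u_2$ a.e., forcing $u_1=u_2$ in $X$.

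The main obstacle is the uniform $L^\infty$-bound: Lemma~\ref{thm8} as stated depends on the growth constant in $(F1)$, which for $\tilde{f}_M$ is the $M$-dependent quantity $\max\{|f(\cdot,0)|_{L^\infty(\Omega)},|f(\cdot,M)|_{L^\infty(\Omega)}\}$ and blows up with $M$ in general. One therefore cannot invoke the lemma as a black box and must revisit its Moser iteration, carefully exploiting the $M$-independent pointwise estimate $\tilde{f}_M(x,u_M)\leq f(x,0)\in L^\infty(\Omega)$ that becomes available only \emph{after} the sign of $u_M$ has been pinned down in the preceding step.
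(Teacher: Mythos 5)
Your overall strategy (truncate, solve the truncated monotone problem via Theorem~\ref{thm7}, prove nonnegativity by testing with $-u_M^-$, then remove the truncation) is in the same spirit as the paper's, and your nonnegativity, nontriviality and uniqueness arguments are correct. But the way you truncate leaves a genuine gap at exactly the point you flag. You truncate at a constant level $M$ and then need $|u_M|_{L^\infty(\Omega)}\leq K$ with $K$ independent of $M$; Lemma~\ref{thm8} cannot deliver this, since its constants $\alpha,\beta$ depend on the growth constant of the nonlinearity, which for $\tilde f_M$ is $\max\{|f(\cdot,0)|_{L^\infty(\Omega)},|f(\cdot,M)|_{L^\infty(\Omega)}\}$ and is not controlled as $M\to\infty$. "Rerunning the Moser-type iteration with the one-sided bound $\tilde f_M(x,u_M)\le f(x,0)$" is a plausible programme but is not carried out, and it is the technical heart of your argument. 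Moreover, the pointwise estimate you state, $0\le \tilde f_M(x,u_M)\le f(x,0)$, is wrong on the left: the hypotheses only give $f(x,0)\ge 0$, and a nonincreasing $f(x,\cdot)$ may well become negative for large $t$ (e.g.\ $f(x,t)=1-t$), so $\tilde f_M(x,u_M)$ can be as negative as $f(x,M)$. This does not sink the upper $L^\infty$-bound (only the upper bound on $f$ enters when testing with $(u_M-k)^+$, and $u_M\ge 0$ is already known), but as written the claim is false and should not be used.

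The paper closes this gap differently, and you could adopt its device to repair your proof without touching the iteration. Set $d:=\operatorname{ess\,sup}_{x\in\Omega}f(x,0)>0$ and let $u_d$ be the unique solution of $-\operatorname{div}a(x,\nabla u)=\lambda d$ with zero boundary data (Theorem~\ref{thm7}); Lemma~\ref{thm8} applies to this problem with a genuinely constant right-hand side and gives $u_d\in L^\infty(\Omega)$, and testing with $-u_d^-$ gives $u_d\ge 0$. The paper then truncates $f$ at the \emph{function} level $u_d(x)$ rather than at a constant $M$, minimizes the resulting coercive functional, and proves $0\le u_*\le u_d$ by a comparison argument: since $f^*(x,u_*)\le f(x,0)\le d$, one has $-\operatorname{div}[a(x,\nabla u_d)-a(x,\nabla u_*)]\ge 0$, and testing with $-(u_d-u_*)^-$ together with the strict monotonicity $(A3)$ forces $(u_d-u_*)^-=0$. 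This comparison step replaces your uniform-in-$M$ $L^\infty$-bound entirely. In your framework the same idea works verbatim: once $u_M\ge 0$ and $\tilde f_M(x,u_M)\le d$, compare $u_M$ with $u_d$ to get $0\le u_M\le u_d$, hence $|u_M|_{L^\infty(\Omega)}\le |u_d|_{L^\infty(\Omega)}=:K$ independently of $M$, and then choose $M\ge K$ as you intended.
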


\begin{proof}
Let us denote $u^- = - \min  \{u,0\}$ for $u\in X$ in this proof. By the hypotheses on $f$, $d:=\underset{x\in \Omega}{\mathop{\rm ess\,sup} }f(x,0)$ is a positive constant. Theorem ~\ref{thm7} guarantees  the existence of a unique weak solution $u_d$ of the problem
\begin{equation}\label{V.1}
\begin{cases}
 -\operatorname{div} a(x,\nabla u)=\lambda d \quad &\text{in } \Omega ,\\
  u=0\quad &\text{on } \partial \Omega.
  \end{cases}
\end{equation}
Lemma~ \ref{thm8} also  guarantees $u_d \in L^{\infty}(\Omega)$. We claim that $u_d$ is nonnegative. In fact, since $u_d$ is a weak solution for \eqref{V.1}, we have
\begin{equation}\label{5.2}
 \int_{\Omega}a(x,\nabla u_d)\cdot \nabla u_d^- dx
=\lambda d\int_{\Omega}u_d^- dx.
\end{equation}
Moreover, from $(A4)$ we have
\begin{eqnarray*}
&{}&\int_{\Omega}a(x,\nabla u_d)\cdot \nabla u_d^- dx=\int_{\{x\in\Omega:\ u_d\geq 0\}}a(x,\nabla u_d)\cdot \nabla u_d^- dx\\
&{}&+\int_{\{x\in\Omega:\ u_d< 0\}}a(x,\nabla u_d)\cdot \nabla u_d^- dx=\int_{\{x\in\Omega:\ u_d< 0\}}a(x,-\nabla u_d^-)\cdot \nabla u_d^- dx\\
&{}&=-\int_{\{x\in\Omega:\ u_d< 0\}}a(x,\nabla (-u_d^-))\cdot \nabla (-u_d^-) dx\leq 0.
\end{eqnarray*}
So it follows from this and \eqref{5.2} that $\int_{\Omega}u_d^{-} dx= 0.$ Consequently, $u_d^{-}=0$ a.e. on $\Omega$, i.e., $u_d$ is nonnegative a.e. on $\Omega$.
Define
\begin{eqnarray*}
f^{\ast}(x,t)=
\begin{cases}
 f(x,0)\quad &\text {if}\ \ x\in \Omega\ \ \text {and}\ \ t<0,\\
 f(x,t)\quad &\text {if}\ \ x\in \Omega\ \ \text {and}\ \ 0\leq t \leq u_d(x),\\
 f(x,u_d(x))\quad &\text {if}\ \ x\in \Omega\ \ \text {and}\ \ u_d(x)<t,
  \end{cases}
\end{eqnarray*}
and
$$F^{\ast}(x,t)=\int_{0}^{t}f^{\ast}(x,s)ds,$$
and
\[
\widetilde{J}(u) =\int_{\Omega }A(x,\nabla u)dx-\lambda \int_{\Omega}F^{\ast}(x,u)dx.
\]
From the assumptions on $f$ and the nonnegativeness of $u_d,$ it is easy to see that there exists a positive constant $C_6$ such that
$$|f^{\ast}(x,t)|\leq C_6 \ \ \text{for a.e.} \ x\in \Omega \ \text{and all} \ t\in\mathbb{R}.$$
So by a similar argument to the proof of Theorem~\ref{thm4.3} we have that $\widetilde{J}\in C^{1}(X,\mathbb{R})$ and
$\widetilde{J}$ is coercive, sequentially weakly lower semicontinuous.
Therefore  $\widetilde{J}$ has a global minimizer $u_{\ast}$ and we have
\begin{eqnarray}
\begin{cases}\label{5.3}
 -\operatorname{div} a(x,\nabla u_{\ast})=\lambda f^{\ast}(x,u_{\ast}) \quad &\text{in } \Omega ,\\
  u_{\ast}=0\quad &\text{on } \partial \Omega.
  \end{cases}
\end{eqnarray}

We claim that $0\leq u_{\ast}\leq u_d$ a.e. on $\Omega$. Indeed, it follows from \eqref{5.3} that
$$\int_{\Omega }a(x,\nabla u_{\ast})\cdot \nabla u_{\ast}^{-} dx=-\int_{\Omega }a(x,\nabla(-u_{\ast}^-))\cdot \nabla (-u_{\ast}^{-}) dx=\lambda\int_{\Omega}f^{\ast}(x,u_{\ast})u_{\ast}^{-}dx.$$
Thus, $u_{\ast}^{-}=0$ because of  $(A4)$ and the fact that $f^{\ast}(x,t)=f(x,0)\geq 0$ for $t<0$. So $u_{\ast}\geq 0$ a.e. on $\Omega$.
To show that $u_{\ast}\leq u_d,$
let $\Omega_1=\{x\in \Omega: u_{\ast} >u_d \}$.
Due to $u_{\ast}\geq 0,$ we have $f^{\ast}(x,u_{\ast})\leq f(x,0)\leq d.$ Thus
\begin{equation}\label{5.4}
-\operatorname{div}\left[ a(x,\nabla u_d)-a(x,\nabla u_{\ast})\right]=\lambda[d-f^{\ast}(x,u_{\ast})] \geq 0.
\end{equation}
Denoting $\varphi=u_d-u_{\ast}$ and taking $-\varphi^{-}$ as a test function in \eqref {5.4} we have
$$-\int_{\Omega} \left[ a(x,\nabla u_d)-a(x,\nabla u_{\ast})\right]\cdot \nabla \varphi^{-} dx\leq 0,$$
and hence, since $\nabla \varphi=-\nabla \varphi^{-}$ on $\Omega_1,$ we get
$$\int_{\Omega_1}  \left[ a(x,\nabla u_d)-a(x,\nabla u_{\ast})\right]\cdot (\nabla u_d-\nabla u_{\ast}) dx\leq 0.$$
It follows that $\nabla \varphi=0$ on $\Omega_1$ due to the monotonicity of $a$. It implies that  $\nabla \varphi^{-}=0$ on $\Omega$ and therefore $\varphi^{-}=0$\  on $\Omega$, i.e., $\varphi \geq 0$\  a.e. on $\Omega$. It means that $u_{\ast}\leq u_d$\  a.e. on\  $\Omega$. Consequently, $f^{\ast}(x,u_{\ast})=f(x,u_{\ast})$ and hence, $u_{\ast}$ is a weak solution of \eqref{1.1}.
Since $ -\operatorname{div}a(x,\nabla u_{\ast})=\lambda f(x,u_{\ast})$ and $f(x,0)\not \equiv 0$ , $u_{\ast}$ is nontrivial.

When $f$  is nonincreasing with respect to the second variable on $\mathbb{R},$ the uniqueness comes from the monotonicity of $a$ and $f$ (see the proof of Theorem 1.1  of \cite{Fan4}).
\end{proof}

\subsection*{Acknowledgement}
The second author was supported by the National Research Foundation of Korea Grant funded by the Korea Government (MEST) (NRF-2015R1D1A3A01019789).


\begin{thebibliography}{99}


\bibitem{Ambrosetti1} A. Ambrosetti, P.H. Rabinowitz,
\emph{Dual variational methods in critical point theory and applications}, J. Funct. Anal. 14 (1973), 349--381.

\bibitem{Bonanno2} G. Bonanno, G. Molica Bisci,
\emph{ Three weak solutions for elliptic Dirichlet problems,} J. Math. Anal. Appl. 382 (2011), 1--8.

\bibitem{Boureanu} M.-M. Boureanu, D.N. Udrea,
\emph{Existence and multiplicity results for elliptic problems with p(.)-Growth conditions}, Nonlinear Anal. RWA. 14 (2013), 1829--1844.

\bibitem{Chen} Y. Chen, S. Levine, M. Rao,
\emph{Variable exponent, linear growth functionals in image
restoration}, SIAM J. Appl. Math., 66 (2006), no. 4, 1383--1406.

\bibitem{Diening} L. Diening,  P. Harjulehto, P. H\"ast\"o, M. R\r{u}\v{z}i\v{c}ka,
\emph{ Lebesgue and Sobolev spaces with  variable exponents},
Lecture Notes in Mathematics  2017, Springer-Verlag, Heidelberg, 2011.

\bibitem{Drabek} P. Dr\'{a}bek, A. Kufner, F. Nicolosi,
\emph{Quasilinear elliptic equations with degenerations and singularities. de Gruyter Series in Nonlinear Analysis and Applications}, 5. Walter de Gruyter and Co., Berlin, 1997.



\bibitem{Fan3} X. Fan, X. Han,
\emph{Existence and multiplicity of solutions for p(x)-Laplacian equations in $\mathbb{R}^N$}, Nonlinear Anal. 59 (2004), 173--188.

\bibitem{Fan4} X. Fan,
\emph{ Existence and uniqueness for the $p(x)$-Laplacian-Dirichlet problems}, Math. Nachr. 284 (2011), no. 11-- 12, 1435--1445.

\bibitem{Ky1} K. Ho, I. Sim,
\emph{Existence and some properties of solutions for  degenerate elliptic equations with exponent variable}, Nonlinear Anal. 98 (2014), 146--164. \emph{Corrigendum to ``Existence and some properties of solutions for  degenerate elliptic equations with exponent variable''[Nonlinear Anal. 98 (2014), 146--164]}, Nonlinear Anal. 128 (2015), 423--426. 

\bibitem{Ky2} K. Ho, I. Sim,
\emph{Existence and multiplicity of solutions for degenerate $p(x)$-Laplace equations involving concave-convex type nonlinearities with two parameters}, Taiwanese J. Math., Vol. 19 (2015), No. 5, 1469--1493.


\bibitem{Kim-Kim} I.H. Kim, Y.-H. Kim,
\emph{Mountain pass type solutions and positivity of the infimum eigenvalue for quasilinear elliptic equations with variable exponents,} Manuscripta Math. 147 (2015), no. 1--2, 169--191.

\bibitem{Kim} Y.-H. Kim, L. Wang, C. Zhang,
\emph{Global bifurcation of a class of degenerate elliptic equations with variable exponents},
J. Math. Anal. Appl. 371 (2010), 624--637.

\bibitem{Kovacik} O. Kov\u{a}\v{c}ik, J. R\u{a}kosnik,
\emph{On spaces $L^{p(x)}$ and $W^{k,p(x)}$},
Czechoslovak Math. J. 41(116) (1991), 592--618.

\bibitem{Le} V.K. Le,
\emph{ On a sub-supersolutionmethod for variational inequalities with Leray-Lions operators in variable exponent spaces}, Nonlinear Anal. 71 (2009), 3305--3321.

\bibitem{Liu} D. Liu, X. Wang, J. Yao,
\emph{On $\left(p_1(x), p_2(x) \right)$-Laplace equations},
 arXiv:1205.1854v1.

\bibitem{Mihailescu1} M. Mih\u{a}ilescu, V. R\u{a}dulescu,
\emph{A multiplicity result for a nonlinear degenerate problem
arising in the theory of electrorheological fluids},  Proc. R. Soc. Lond. Ser. A Math. Phys. Eng. Sci. 462 (2006), 2625--2641.


\bibitem{Murthy} V. Murthy, G. Stampacchia,
\emph{Boundary value problems for some degenerate-elliptic operators}, Ann. Mat. Pura Appl. 80 (1968), 1--122.


\bibitem{Ruzicka} M. R\r{u}\v{z}i\v{c}ka,
\emph{Electrorheological Fluids: Modeling and Mathematical Theory},
Lecture Notes in Mathematics, 1748, Springer-Verlag, Berlin, 2000.


\bibitem{Sim} I. Sim, Y.-H. Kim,
\emph{Existence of solutions and positivity of the infimum eigenvalue
for degenerate elliptic equations with variable exponents,} Discrete Contin. Dyn. Syst. Supplement 2013, 695--707.


\bibitem{Willem}{M. Willem},
\emph{Minimax theorems}, Birkh\"{a}user, Boston, 1996.


\bibitem{Zeidler} E. Zeidler,
\emph{Nonlinear Functional Analysis and Its Applications II/B}, Springer, New York, 1990.

\bibitem{Zhikov} V.V. Zhikov,
\emph{On some variational problems}, Russ. J. Math. Phys. 5 (1997), 105--116.

\bibitem{Zhao} J.F. Zhao,
\emph{Structure Theory of Banach Spaces}, Wuhan University Press, Wuhan, 1991 (in Chinese).

\end{thebibliography}
\end{document}